\providecommand{\U}[1]{\protect \rule{.1in}{.1in}}
\newtheorem{theorem}{Theorem}[section]
\newtheorem{lemma}[theorem]{Lemma}
\newtheorem{corollary}[theorem]{Corollary}
\begin{document}
\title{Compactness of matrix operators on some sequence spaces derived by Fibonacci numbers}
\author{Emrah Evren KARA}
\address{DEPARTMENT OF MATHEMATICS, D\"{U}ZCE UNIVERSITY, 81620, D\"{U}ZCE, TURKEY}
\email{eevrenkara@hotmail.com, eevrenkara@duzce.edu.tr}
\author{Met\.{I}n Ba\c{s}ar\i r}
\address{DEPARTMENT OF MATHEMATICS, SAKARYA UNIVERSITY, 54187, SAKARYA, TURKEY}
\email{basarir@sakarya.edu.tr}
\author{M. Mursaleen}
\address{DEPARTMENT OF MATHEMATICS, ALIGARH MUSLIM UNIVERSITY, 202002, ALIGARH, INDIA}
\email{mursaleenm@gmail.com}
\subjclass[2000]{ 46A45, 11B39, 46B50.}
\date{}
\keywords{Sequence spaces, Fibonacci numbers, Compact operators, Hausdorff measure of noncompactness}

\begin{abstract}
In this paper, we apply the Hausdorff measure of noncompactness to obtain the
necessary and sufficient conditions for certain matrix operators on the
Fibonacci difference sequence spaces $\ell_{p}(\widehat{F})$ and $\ell
_{\infty}(\widehat{F})$ to be compact, where $1\leq p<\infty$.

\end{abstract}
\maketitle

\section{\textbf{Introduction and preliminaries}}

Let $%
%TCIMACRO{\U{2115} }%
%BeginExpansion
\mathbb{N}
%EndExpansion
=\{0,1,2,...\}$ and $%
%TCIMACRO{\U{211d} }%
%BeginExpansion
\mathbb{R}
%EndExpansion
$ be the set of all real numbers. We shall write $\lim_{k}$, $\sup_{k}$,
$\inf_{k}$ and $\sum_{k}$ instead of $\lim_{k\rightarrow \infty}$, $\sup_{k\in%
%TCIMACRO{\U{2115} }%
%BeginExpansion
\mathbb{N}
%EndExpansion
}$, $\inf_{k\in%
%TCIMACRO{\U{2115} }%
%BeginExpansion
\mathbb{N}
%EndExpansion
}$ and $\sum_{k=0}^{\infty}$, respectively. Let $\omega$ be the vector space
of all real sequences $x=(x_{k})_{k\in%
%TCIMACRO{\U{2115} }%
%BeginExpansion
\mathbb{N}
%EndExpansion
}$. By the term $\mathit{sequence}$ $\mathit{space}$, we shall mean any linear
subspace of $\omega$. Let $\varphi,$ $\ell_{\infty},$ $c$ and $c_{0}$ denote
\ the sets \ of all finite, bounded, convergent and null sequences,
respectively. We write $\ell_{p}=\{x\in \omega:\sum_{k}\left \vert
x_{k}\right \vert ^{p}<\infty \}$ for $1\leq p<\infty.$ Also, we shall use the
conventions that $e=(1,1,...)$ and $e^{(n)}$ is the sequence whose only
non-zero term is $1$ in the $n^{\text{th}}$ place for each $n\in%
%TCIMACRO{\U{2115} }%
%BeginExpansion
\mathbb{N}
%EndExpansion
$. For any sequence $x=(x_{k})$, let $x^{[n]}=\sum_{k=0}^{n}x_{k}e^{(k)}$ be
its $n$-section. Morever, we write $bs$ and $cs$ for the sets of sequences
with bounded and convergent partial sums, respectively.

The $\mathit{Fibonacci}$ $\mathit{numbers}$ are the sequence of numbers
$\{f_{n}\}_{n=0}^{\infty}$ defined by the linear recurrence equations
\[
f_{0}=f_{1}=1\text{ and }f_{n}=f_{n-1}+f_{n-2}\text{; \ }n\geq2.
\]

Fibonacci numbers have many interesting properties and applications in arts,
sciences and architecture. For example, the ratio sequences of Fibonacci
numbers converge to the golden ratio which is important in sciences and arts.
Also, some basic properties of Fibonacci numbers can be found in [2].

A $\mathit{B-space}$\textit{ }is a complete normed space. A topological
sequence space in which all coordinate functionals $\pi_{k}$, $\pi
_{k}(x)=x_{k}$, are continuous is called a $\mathit{K-}$ $\mathit{space}$. A
$\mathit{BK-}$ $\mathit{space}$ is defined as a $K-$ space which is also a
$B-$ space, that is, a $BK-$ space is a Banach space with continuous
coordinates. A $BK-$ space $X\supset \varphi$ is said to have $AK$ if every
sequence $x=(x_{k})\in X$ has a unuqiue representation $x=\sum_{k}x_{k}%
e^{(k)}$. For example, the space $\ell_{p}$ $(1\leq p<\infty)$ is $BK-$ space
with $\left \Vert x\right \Vert _{p}=\left(  \sum_{k}\left \vert x_{k}\right \vert
^{p}\right)  ^{1/p}$ and $c_{0}$, $c$ and $\ell_{\infty}$ are $BK-$ spaces
with $\left \Vert x\right \Vert _{\infty}=\sup_{k}\left \vert x_{k}\right \vert $.
Further, the $BK-$ spaces $c_{0}$ and $\ell_{p}$ have $AK$, where $1\leq
p<\infty$ (cf. [3,4].

A sequence $(b_{n})$ in a normed space $X$ is called a $\mathit{Schauder}%
$\textit{ }$\mathit{basis}$ for $X$ if for every $x\in X$ there is a unique
sequence $(\alpha_{n})$ of scalars such that $x=\sum_{n}\alpha_{n}b_{n}$,
i.e., $\lim_{m}\left \Vert x-\sum_{n=0}^{m}\alpha_{n}b_{n}\right \Vert =0.$

The $\beta$-dual of a sequence space $X$ is defined by
\[
X^{\beta}=\left \{  a=(a_{k})\in \omega:ax=(a_{k}x_{k})\in cs\text{ for all
}x=(x_{k})\in X\right \}  .
\]

Let $A=(a_{nk})_{n,k=0}^{\infty}$ be an infinite matrix of real numbers
$a_{nk}$, where $n,k\in%
%TCIMACRO{\U{2115} }%
%BeginExpansion
\mathbb{N}
%EndExpansion
$. We write $A_{n}$ for the sequence in the $n^{\text{th}}$ row of $A$, that
is $A_{n}=(a_{nk})_{k=0}^{\infty}$ for every $n\in%
%TCIMACRO{\U{2115} }%
%BeginExpansion
\mathbb{N}
%EndExpansion
$. In addition, if $x=(x_{k})_{k=0}^{\infty}\in \omega$ then we define the
$A$\textit{-}$\mathit{transform}$ $\mathit{of}$ $x$ as the sequence
$Ax=\left \{  A_{n}(x)\right \}  _{n=0}^{\infty}$, where
\begin{equation}
A_{n}(x)=%
%TCIMACRO{\dsum _{k=0}^{\infty}}%
%BeginExpansion
{\displaystyle \sum_{k=0}^{\infty}}
%EndExpansion
a_{nk}x_{k};\text{ \  \ }\left(  n\in%
%TCIMACRO{\U{2115} }%
%BeginExpansion
\mathbb{N}
%EndExpansion
\right)  \tag{1.1}%
\end{equation}
provided the series on the right side converges for each $n\in%
%TCIMACRO{\U{2115} }%
%BeginExpansion
\mathbb{N}
%EndExpansion
.$

For arbitrary subsets $X$ and $Y$ of $\omega$, we write $\left(  X,Y\right)  $
for the class of all infinite matrices that map $X$ into $Y$. Thus,
$A\in \left(  X,Y\right)  $ if and only if $A_{n}\in X^{\beta}$ for all $n\in%
%TCIMACRO{\U{2115} }%
%BeginExpansion
\mathbb{N}
%EndExpansion
$ and $Ax\in Y$ for all $x\in X$.

The matrix domain $X_{A}$ of an infinite matrix $A$ in sequence space $X$ is
defined by%
\begin{equation}
X_{A}=\left \{  x=\left(  x_{k}\right)  \in \omega:Ax\in X\right \}  \tag{1.2}%
\end{equation}
which is a sequence space.

Let $\Delta$ denotes the matrix $\Delta=(\Delta_{nk})$ defined by%
\[
\Delta_{nk}=\left \{
\begin{array}
[c]{cc}%
(-1)^{n-k} & (n-1\leq k\leq n)\\
0 & (0\leq k<n-1)\text{ or\ }(k>n)
\end{array}
\right.
\]
or%
\[
\Delta_{nk}=\left \{
\begin{array}
[c]{cc}%
(-1)^{n-k} & (n\leq k\leq n+1)\\
0 & (0\leq k<n)\text{ or\ }(k>n+1).
\end{array}
\right.
\]

In the literature, the matrix domain $\lambda_{\Delta}$ is called the
$\mathit{difference}$\textit{ }$\mathit{sequence}$\textit{ }$\mathit{space}$
whenever $\lambda$ is a normed or paranormed sequence space. The idea of
difference sequence space was introduced by K\i zmaz [5]. In 1981, K\i zmaz
[5] defined the sequence spaces%
\[
X(\Delta)=\left \{  x=(x_{k})\in \omega:(x_{k}-x_{k+1})\in X\right \}
\]
for $X=\ell_{\infty}$, $c$ and $c_{0}$. The difference space $bv_{p}$,
consisting of all sequnces $(x_{k})$ such that $(x_{k}-x_{k-1})$ is in the
sequence space $\ell_{p}$, was studied in the case $0<p<1$ by Altay and
Ba\c{s}ar [6] and in the case $1\leq p\leq \infty$ by Ba\c{s}ar and Altay [7]
and \c{C}olak et al. [8]. The paranormed difference sequence space%
\[
\Delta \lambda(p)=\{x=\left(  x_{k}\right)  \in \omega:(x_{k}-x_{k+1})\in
\lambda(p)\}
\]
was examined by Ahmad and Mursaleen [9] and Malkowsky [10], where $\lambda(p)$
is any of the paranormed spaces $\ell_{\infty}(p)$, $c(p)$ and $c_{0}(p)$
defined by Simons [11] and Maddox [12].

Recently, Ba\c{s}ar et al. [13] have defined the sequence spaces $bv(u,p)$ and
$bv_{\infty}(u,p)$ by%
\[
bv(u,p)=\{x=\left(  x_{k}\right)  \in \omega:\sum_{k}\left \vert u_{k}%
(x_{k}-x_{k-1})\right \vert ^{p_{k}}<\infty \}
\]
and
\[
bv_{\infty}(u,p)=\{x=\left(  x_{k}\right)  \in \omega:\sup_{k\in%
%TCIMACRO{\U{2115} }%
%BeginExpansion
\mathbb{N}
%EndExpansion
}\left \vert u_{k}(x_{k}-x_{k-1})\right \vert ^{p_{k}}<\infty \},
\]
where $u=(u_{k})$ is an arbitrary fixed sequence and $0<p_{k}\leq H<\infty$
for all $k\in%
%TCIMACRO{\U{2115} }%
%BeginExpansion
\mathbb{N}
%EndExpansion
.$ Also in [14-23], authors studied some difference sequence spaces.

Let $S_{X}$ denote the unit sphere in a normed linear space $X$. If
$X\supset \varphi$ is a $BK$ space and $a=(a_{k})\in \omega$, then we write
\begin{equation}
\left \Vert a\right \Vert _{X}^{\ast}=\sup_{x\in S_{X}}\left \vert \sum
\limits_{k}a_{k}x_{k}\right \vert \tag{1.3}%
\end{equation}
provided the expression on the right side is defined and finite which is the
case whenever $a\in X^{\beta}$.

The following results are very important in our study.

\begin{lemma}
[{[3, Theorem 1.29]}]Let $1<p<\infty$ and $q=p/(p-1)$. Then, we have
$\ell_{\infty}^{\beta}=\ell_{1}$, $\ell_{1}^{\beta}=\ell_{\infty}$ and
$\ell_{p}^{\beta}=\ell_{q}$. Furthermore, let $X$ denote any of the spaces
$\ell_{\infty},$ $\ell_{1}$ or $\ell_{p}$. Then, we have $\left \Vert
a\right \Vert _{X}^{\ast}=\left \Vert a\right \Vert _{X^{\beta}}$ for all $a\in
X^{\beta}$, where $\left \Vert .\right \Vert _{X^{\beta}}$ is the natural norm
on the dual space $X^{\beta}$.
\end{lemma}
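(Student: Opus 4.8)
The plan is to obtain each $\beta$-dual as an equality of two inclusions and then to read off the norm identity by matching a Hölder upper bound against an explicit extremal vector. Throughout I use that $X^{\beta}=\{a:\sum_{k}a_{k}x_{k}\text{ converges for all }x\in X\}$ and that the natural norm on $X^{\beta}$ is $\Vert\cdot\Vert_{1}$, $\Vert\cdot\Vert_{\infty}$ or $\Vert\cdot\Vert_{q}$ according as $X^{\beta}$ is $\ell_{1}$, $\ell_{\infty}$ or $\ell_{q}$. The inclusions $\ell_{q}\subseteq\ell_{p}^{\beta}$, $\ell_{\infty}\subseteq\ell_{1}^{\beta}$ and $\ell_{1}\subseteq\ell_{\infty}^{\beta}$ all come directly from Hölder's inequality: for $a\in\ell_{q}$ and $x\in\ell_{p}$ one has $\sum_{k}\vert a_{k}x_{k}\vert\leq\Vert a\Vert_{q}\Vert x\Vert_{p}<\infty$, so the series converges absolutely, and the other two follow from $\vert a_{k}x_{k}\vert\leq\Vert a\Vert_{\infty}\vert x_{k}\vert$ and $\vert a_{k}x_{k}\vert\leq\Vert x\Vert_{\infty}\vert a_{k}\vert$.

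For the reverse inclusions $\ell_{\infty}^{\beta}\subseteq\ell_{1}$ and $\ell_{1}^{\beta}\subseteq\ell_{\infty}$ I would argue by contraposition with a divergence-forcing test sequence. If $a\notin\ell_{1}$, then $x_{k}=\operatorname{sgn}(a_{k})$ lies in $\ell_{\infty}$ but makes the partial sums of $\sum_{k}a_{k}x_{k}=\sum_{k}\vert a_{k}\vert$ diverge, so $a\notin\ell_{\infty}^{\beta}$. If $a\notin\ell_{\infty}$, choose a strictly increasing $(k_{j})$ with $\vert a_{k_{j}}\vert\geq 2^{j}$ and set $x_{k_{j}}=1/a_{k_{j}}$, all other coordinates zero; then $\sum_{j}\vert x_{k_{j}}\vert\leq\sum_{j}2^{-j}<\infty$ shows $x\in\ell_{1}$, while each chosen term contributes $a_{k_{j}}x_{k_{j}}=1$, so the series fails to converge and $a\notin\ell_{1}^{\beta}$.

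I would treat $\ell_{p}^{\beta}\subseteq\ell_{q}$ as the genuinely hardest point, and the place where completeness really enters. Each truncated functional $f_{n}(x)=\sum_{k=0}^{n}a_{k}x_{k}$ is bounded on $\ell_{p}$ with $\Vert f_{n}\Vert=\bigl(\sum_{k=0}^{n}\vert a_{k}\vert^{q}\bigr)^{1/q}$, since it is represented by the finitely supported vector $(a_{0},\dots,a_{n},0,\dots)\in\ell_{q}=(\ell_{p})^{\ast}$. The hypothesis $a\in\ell_{p}^{\beta}$ forces $f_{n}(x)$ to converge, hence to be bounded in $n$, for every $x\in\ell_{p}$; the uniform boundedness principle then yields $\sup_{n}\Vert f_{n}\Vert<\infty$, which is precisely $\Vert a\Vert_{q}<\infty$, so $a\in\ell_{q}$. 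A self-contained alternative is a gliding-hump construction of a suitable $x\in\ell_{p}$ whenever $a\notin\ell_{q}$, but the Banach--Steinhaus route is cleanest.

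Finally, for the identity $\Vert a\Vert_{X}^{\ast}=\Vert a\Vert_{X^{\beta}}$ I would match bounds. The inequality $\Vert a\Vert_{X}^{\ast}\leq\Vert a\Vert_{X^{\beta}}$ is immediate from definition (1.3) together with the same Hölder estimates. For the reverse I exhibit near-extremal $x\in S_{X}$: for $X=\ell_{\infty}$ take $x_{k}=\operatorname{sgn}(a_{k})$, so $\sum_{k}a_{k}x_{k}=\Vert a\Vert_{1}$; for $X=\ell_{1}$ take $x=e^{(n)}$ with $\vert a_{n}\vert$ near $\Vert a\Vert_{\infty}$; and for $X=\ell_{p}$ take the Hölder extremizer $x_{k}=\operatorname{sgn}(a_{k})\vert a_{k}\vert^{q-1}$ normalized in $\ell_{p}$, for which the relation $q=p/(p-1)$ gives $(q-1)p=q$ and hence $\sum_{k}a_{k}x_{k}/\Vert x\Vert_{p}=\Vert a\Vert_{q}$. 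Passing to finitely supported truncations handles any convergence bookkeeping, and combining the two bounds gives the asserted equality in all three cases.
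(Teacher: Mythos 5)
Your proof is correct, but note that the paper does not prove this lemma at all: it is quoted verbatim as a known result, [3, Theorem 1.29] of Malkowsky--Rako\v{c}evi\'{c}, and serves purely as imported background for the later computations on $\ell_{p}(\widehat{F})$. What you have reconstructed is essentially the standard textbook proof of that cited theorem: H\"{o}lder's inequality for the easy inclusions, divergence-forcing test sequences (the sign vector for $\ell_{\infty}^{\beta}\subseteq\ell_{1}$, the gliding-hump-type vector $x_{k_{j}}=1/a_{k_{j}}$ for $\ell_{1}^{\beta}\subseteq\ell_{\infty}$), Banach--Steinhaus applied to the truncated functionals $f_{n}$ for the genuinely nontrivial inclusion $\ell_{p}^{\beta}\subseteq\ell_{q}$, and the H\"{o}lder extremizers $x_{k}=\operatorname{sgn}(a_{k})\vert a_{k}\vert^{q-1}$ (respectively $\operatorname{sgn}(a_{k})$ and $e^{(n)}$) for the norm identity $\Vert a\Vert_{X}^{\ast}=\Vert a\Vert_{X^{\beta}}$; your exponent bookkeeping $(q-1)p=q$ and the resulting quotient $\Vert a\Vert_{q}^{q}/\Vert a\Vert_{q}^{q/p}=\Vert a\Vert_{q}$ check out. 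Two trivial points you could tidy: in the $\ell_{\infty}$ extremal step the vector $(\operatorname{sgn}(a_{k}))$ lies on the unit sphere only when $a\neq0$ (the case $a=0$ being vacuous), and the norm formula $\Vert f_{n}\Vert=\bigl(\sum_{k=0}^{n}\vert a_{k}\vert^{q}\bigr)^{1/q}$ that feeds into Banach--Steinhaus is itself verified by the same extremizer you use later, so stating it once up front would make the logical order cleaner. Neither affects correctness; your argument would stand as a self-contained substitute for the citation.
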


\begin{lemma}
[{[3, Theorem 1.23 (a)]}]Let $X$ and $Y$ be $BK$-spaces. Then we have
$(X,Y)\subset B(X,Y)$, that is, every matrix $A\in(X,Y)$ defines a linear
operator $L_{A}\in B(X,Y)$ by $L_{A}(x)=Ax$ for all $x\in X$, where $B(X,Y)$
denotes the set all bounded (continuous) linear operators $L:X\rightarrow Y.$
\end{lemma}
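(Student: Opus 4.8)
The plan is to invoke the Closed Graph Theorem, which applies here because both $X$ and $Y$, being $BK$-spaces, are Banach spaces. Linearity of $L_{A}$ is immediate from the bilinearity of the assignment $(A,x)\mapsto Ax$ together with the fact that each row series $A_{n}(x)=\sum_{k}a_{nk}x_{k}$ converges, since $A\in(X,Y)$ forces $A_{n}\in X^{\beta}$ for every $n$. Thus it remains only to show that $L_{A}$ has a closed graph, and boundedness will then follow automatically.

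First I would fix a sequence $(x^{(j)})$ in $X$ together with $x\in X$ and $y\in Y$ such that $x^{(j)}\to x$ in $X$ and $L_{A}(x^{(j)})=Ax^{(j)}\to y$ in $Y$; the goal is to prove $y=Ax=L_{A}(x)$. Because $Y$ is a $BK$-space, the coordinate functional $\pi_{n}$ is continuous on $Y$, so convergence $Ax^{(j)}\to y$ in $Y$ yields $A_{n}(x^{(j)})\to y_{n}$ for each fixed $n$. On the other hand, if each row map $x\mapsto A_{n}(x)$ is continuous on $X$, then $x^{(j)}\to x$ in $X$ gives $A_{n}(x^{(j)})\to A_{n}(x)$. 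Comparing the two limits forces $y_{n}=A_{n}(x)$ for every $n$, i.e. $y=Ax$, so the graph is closed.

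The key step, and the main obstacle, is therefore to verify that each row functional $f_{n}\colon X\to\mathbb{R}$, $f_{n}(x)=A_{n}(x)=\sum_{k}a_{nk}x_{k}$, is continuous, using only that $A_{n}\in X^{\beta}$. Here I would exploit the $BK$ structure of $X$: the truncated functionals $f_{n}^{[m]}(x)=\sum_{k=0}^{m}a_{nk}x_{k}$ are finite linear combinations of the continuous coordinates $\pi_{k}$, hence continuous on $X$; and since $A_{n}\in X^{\beta}$ means $(a_{nk}x_{k})_{k}\in cs$, the partial sums $f_{n}^{[m]}(x)$ converge to $f_{n}(x)$ for every $x\in X$. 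A pointwise convergent sequence of bounded linear functionals on a Banach space is pointwise bounded, so the Banach--Steinhaus theorem yields $\sup_{m}\Vert f_{n}^{[m]}\Vert<\infty$; passing to the limit shows $f_{n}$ is bounded, hence continuous. This supplies the missing continuity and completes the closed-graph argument.

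Having dispatched continuity of the rows, the remainder is routine: the Closed Graph Theorem applied to the linear map $L_{A}$ between the Banach spaces $X$ and $Y$ gives $L_{A}\in B(X,Y)$, and since $A\in(X,Y)$ was arbitrary we conclude $(X,Y)\subset B(X,Y)$. I expect no difficulty beyond the continuity of the row functionals; the genuinely non-trivial input is the interplay between membership in $X^{\beta}$ and the $BK$ (continuous-coordinate) property of $X$, which is exactly what converts abstract convergence of the defining series into operator-theoretic boundedness.
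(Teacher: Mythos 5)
Your proof is correct and follows essentially the same route as the source the paper relies on: the lemma is stated without proof, cited from [3, Theorem 1.23(a)], and the argument given there is precisely your closed-graph argument --- continuity of each row functional $f_{n}(x)=\sum_{k}a_{nk}x_{k}$ obtained by applying Banach--Steinhaus to the truncated sums $\sum_{k=0}^{m}a_{nk}\pi_{k}(x)$ (continuous by the $K$-property of $X$ and pointwise convergent since $A_{n}\in X^{\beta}$), coordinatewise identification of the limit via the $K$-property of $Y$, and then the Closed Graph Theorem between the Banach spaces $X$ and $Y$. I see no gaps.
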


\begin{lemma}
[{[3, Lemma 2.2]}]Let $X\supset \phi$ be $BK$-space and $Y$ be any of the
spaces $c_{0},$ $c$ or $\ell_{\infty}$. If $A\in(X,Y)$, then%

\[
\left \Vert L_{A}\right \Vert =\left \Vert A\right \Vert _{(X,\ell_{\infty}%
)}=\underset{n}{\sup}\left \Vert A_{n}\right \Vert _{X}^{\ast}<\infty.
\]

\end{lemma}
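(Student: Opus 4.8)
The plan is to evaluate $\left\Vert L_{A}\right\Vert$ directly from the definition of the operator norm and then collapse it onto $\sup_{n}\left\Vert A_{n}\right\Vert _{X}^{\ast}$ by interchanging two suprema. The structural fact that makes this work is that each of the three admissible targets $c_{0}$, $c$ and $\ell_{\infty}$ carries one and the same norm, $\left\Vert y\right\Vert _{\infty}=\sup_{n}\left\vert y_{n}\right\vert$, and the inclusions $c_{0}\subset c\subset \ell_{\infty}$ are isometric. Hence, for every $x\in X$ we have $\left\Vert Ax\right\Vert _{Y}=\left\Vert Ax\right\Vert _{\infty}=\sup_{n}\left\vert A_{n}(x)\right\vert$, regardless of which of the three spaces $Y$ denotes.

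First I would secure finiteness by appealing to the preceding lemma ([3, Theorem 1.23(a)]): since $X$ and $Y$ are $BK$-spaces and $A\in(X,Y)$, the map $L_{A}$ lies in $B(X,Y)$, so $\left\Vert L_{A}\right\Vert <\infty$ and the number $\left\Vert L_{A}\right\Vert =\sup_{x\in S_{X}}\left\Vert L_{A}(x)\right\Vert _{Y}$ is well defined. Note that $A\in(X,Y)$ already forces each row $A_{n}\in X^{\beta}$, so that each individual $\left\Vert A_{n}\right\Vert _{X}^{\ast}$ is finite by $(1.3)$; the boundedness lemma is precisely what upgrades this to \emph{uniform} finiteness of the supremum over $n$.

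The main computation then reads
\begin{equation*}
\left\Vert L_{A}\right\Vert =\sup_{x\in S_{X}}\left\Vert Ax\right\Vert _{Y}=\sup_{x\in S_{X}}\sup_{n}\left\vert A_{n}(x)\right\vert =\sup_{n}\sup_{x\in S_{X}}\left\vert A_{n}(x)\right\vert =\sup_{n}\left\Vert A_{n}\right\Vert _{X}^{\ast},
\end{equation*}
where the two middle suprema may be swapped because both iterated suprema equal the single supremum of $\left\vert A_{n}(x)\right\vert$ over the product index set $\left\{ (n,x):n\in\mathbb{N},\ x\in S_{X}\right\}$, and the last equality is precisely definition $(1.3)$ applied to $a=A_{n}$. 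Since $\left\Vert A\right\Vert _{(X,\ell_{\infty})}$ is defined to be $\sup_{n}\left\Vert A_{n}\right\Vert _{X}^{\ast}$, all three displayed quantities agree.

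I do not expect a genuine obstacle here; the argument reduces to the unconditional interchangeability of suprema taken over a product set, which requires no boundedness hypothesis. The only points demanding care are to confirm that the target norm really is the supremum norm for all three choices of $Y$, so that the reduction $\left\Vert Ax\right\Vert _{Y}=\sup_{n}\left\vert A_{n}(x)\right\vert$ holds uniformly, and to keep straight that the hypothesis $A\in(X,Y)$ contributes nothing to the chain of equalities itself but is needed solely for the finiteness claim by way of the earlier boundedness lemma.
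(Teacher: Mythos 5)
Your proof is correct. There is in fact no proof in the paper to compare against: this lemma is quoted from [3, Lemma~2.2] as a known result, and your argument --- the isometric reduction $\left\Vert Ax\right\Vert _{Y}=\sup_{n}\left\vert A_{n}(x)\right\vert$ valid for all three targets, finiteness of $\left\Vert L_{A}\right\Vert$ via Lemma~1.2, the (always valid) interchange of the two suprema over the product index set, and the identification $\sup_{x\in S_{X}}\left\vert A_{n}(x)\right\vert =\left\Vert A_{n}\right\Vert _{X}^{\ast}$ from (1.3) --- is precisely the standard proof underlying that citation. The only point requiring a careful reading is the middle quantity $\left\Vert A\right\Vert _{(X,\ell_{\infty})}$, which the paper never defines; as you correctly treat it, in [3] this is simply notation for $\sup_{n}\left\Vert A_{n}\right\Vert _{X}^{\ast}$, so that equality is definitional rather than something to be proved.
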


By $M_{X},$ we denote the collection of all bounded subsets of a metric space
$\left(  X,d\right)  .$ If $Q\in M_{X},$ then the \textit{Hausdorff measure of
noncompactness} of the set $Q,$ denoted by $\chi \left(  Q\right)  ,$ is
defined by
\[
\chi \left(  Q\right)  :=\inf \left \{  \varepsilon>0:Q\subset \underset
{i=1}{\overset{n}{\cup}}B\left(  x_{i},r_{i}\right)  ,\text{ }x_{i}\in
X,\text{ }r_{i}<\varepsilon \text{ }\left(  i=1,2,...,n\right)  ,\text{ }n\in%
%TCIMACRO{\U{2115} }%
%BeginExpansion
\mathbb{N}
%EndExpansion
-\{0\} \right \}  .
\]
The function $\chi:M_{X}\rightarrow \left[  0,\infty \right)  $ is called the
$\mathit{Hausdorff}$\textit{ }$\mathit{measure}$\textit{ }$\mathit{of}%
$\textit{ }$\mathit{noncompactness}$.

The basic properties of the Hausdorff measure of noncompactness can be found
in [3]

The following result gives an estimate for the Hausdorff measure of
noncompactness in the $BK$ space $\ell_{p}$ for $1\leq p<\infty.$

\begin{lemma}
[{[24, Theorem 2.8]}]Let $1\leq p<\infty$ and $Q\in M_{\ell_{p}}.$ If
$P_{m}:\ell_{p}\rightarrow \ell_{p}$ $(m\in%
%TCIMACRO{\U{2115} }%
%BeginExpansion
\mathbb{N}
%EndExpansion
)$ is the operator defined by $P_{m}(x)=(x_{0},x_{1},...,x_{m},0,0,...)$ for
all $x=(x_{k})\in \ell_{p}$, then we have%
\[
\chi(Q)=\lim_{m\rightarrow \infty}\left(  \sup_{x\in Q}\left \Vert
(I-P_{m})(x)\right \Vert _{\ell_{p}}\right)  ,
\]
where $I$ is the identity operator on $\ell_{p}.$
\end{lemma}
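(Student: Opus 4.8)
The plan is to prove the formula
\[
\chi(Q)=\lim_{m\to\infty}\Bigl(\sup_{x\in Q}\bigl\Vert (I-P_m)(x)\bigr\Vert_{\ell_p}\Bigr)
\]
by establishing the two inequalities separately, using the structure of $\ell_p$ as a $BK$ space with $AK$. First I would record that $\ell_p$ has $AK$ for $1\le p<\infty$, so that for every fixed $x\in\ell_p$ one has $\Vert(I-P_m)(x)\Vert_{\ell_p}\to 0$ as $m\to\infty$. The key preliminary observation is that the quantity
\[
r_m:=\sup_{x\in Q}\bigl\Vert (I-P_m)(x)\bigr\Vert_{\ell_p}
\]
is nonincreasing in $m$ (because the tails shrink) and bounded below by $0$, hence the limit $r:=\lim_m r_m$ exists; this is what makes the right-hand side well defined and is worth stating explicitly before the estimates.

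For the upper bound $\chi(Q)\le r$, I would fix $\varepsilon>0$ and choose $m$ so large that $r_m<r+\varepsilon$. The finite-dimensional image $P_m(Q)$ is a bounded subset of the finite-dimensional space $P_m(\ell_p)\cong\mathbb{R}^{m+1}$, hence totally bounded; so it admits a finite $\varepsilon$-net $\{P_m(x_1),\dots,P_m(x_n)\}$. I would then show these points form a finite $(r+2\varepsilon)$-net for $Q$ in $\ell_p$: for arbitrary $x\in Q$, pick $x_i$ with $\Vert P_m(x)-P_m(x_i)\Vert_{\ell_p}<\varepsilon$ and estimate, via the triangle inequality, $\Vert x-x_i\Vert\le\Vert P_m(x-x_i)\Vert+\Vert(I-P_m)x\Vert+\Vert(I-P_m)x_i\Vert\le \varepsilon+r_m+r_m<\varepsilon+2(r+\varepsilon)$ — so after adjusting constants the covering radius is controlled by $r$ plus a multiple of $\varepsilon$. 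Letting $\varepsilon\to 0$ gives $\chi(Q)\le r$.

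For the lower bound $r\le\chi(Q)$, the idea is that the projections $P_m$ and the tail operators $I-P_m$ interact nicely with any finite cover. Given any $\varepsilon>\chi(Q)$, cover $Q$ by finitely many balls $B(y_i,r_i)$ with $r_i<\varepsilon$; since each center $y_i$ has $\Vert(I-P_m)y_i\Vert\to 0$, for $m$ large every tail $\Vert(I-P_m)y_i\Vert$ is small, and for any $x\in B(y_i,r_i)$ one has $\Vert(I-P_m)x\Vert\le\Vert(I-P_m)(x-y_i)\Vert+\Vert(I-P_m)y_i\Vert\le\Vert x-y_i\Vert+\Vert(I-P_m)y_i\Vert<\varepsilon+\text{(small)}$, using that $I-P_m$ has operator norm $\le 1$. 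Taking the supremum over $x\in Q$ and then $m\to\infty$ yields $r\le\varepsilon$, and letting $\varepsilon\downarrow\chi(Q)$ gives $r\le\chi(Q)$. The main obstacle, and the place requiring the most care, is the lower bound: one must handle the finitely many centers uniformly, choosing a single $m$ large enough to make \emph{all} the tails $\Vert(I-P_m)y_i\Vert$ small simultaneously (possible precisely because the cover is finite), and one must correctly exploit that $\Vert I-P_m\Vert\le 1$ on $\ell_p$ so that the tail of a difference is bounded by the norm of the difference.
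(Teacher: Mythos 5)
The first thing to note is that the paper does not prove this lemma at all: it is quoted as [24, Theorem 2.8] (Rako\v{c}evi\'{c}), so your attempt can only be measured against the standard proof of that result. Your overall strategy --- monotonicity of $r_m=\sup_{x\in Q}\|(I-P_m)(x)\|_{\ell_p}$ so that the limit $r$ exists, an upper bound via total boundedness of the finite-dimensional set $P_m(Q)$, and a lower bound using a finite cover of $Q$ together with $\|I-P_m\|\le 1$ and the $AK$ property of $\ell_p$ --- is exactly that standard proof. Your lower-bound paragraph is correct as written, including the key point of choosing a single $m$ that makes all finitely many tails $\|(I-P_m)(y_i)\|$ small simultaneously.

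The upper bound, however, contains a genuine error in execution. You correctly announce that the truncations $P_m(x_1),\dots,P_m(x_n)$ form an $(r+2\varepsilon)$-net for $Q$, but what you then estimate is $\|x-x_i\|$, i.e.\ you revert to using the points $x_i\in Q$ as centers, and the bound you obtain,
\[
\|x-x_i\|\le\|P_m(x-x_i)\|+\|(I-P_m)(x)\|+\|(I-P_m)(x_i)\|<\varepsilon+2r_m\le 2r+3\varepsilon,
\]
only yields $\chi(Q)\le 2r$ after letting $\varepsilon\to 0$. ``Adjusting constants'' cannot repair this: the lemma asserts an exact equality, and a spurious factor $2$ in the upper bound destroys it --- $\chi(Q)\le 2r$ together with your (correct) lower bound $r\le\chi(Q)$ gives nothing like $\chi(Q)=r$. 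The fix is the one already implicit in your own announcement: since the definition of $\chi$ allows the centers of the covering balls to be arbitrary points of $\ell_p$, not necessarily points of $Q$, estimate the distance to the truncated point instead,
\[
\|x-P_m(x_i)\|\le\|P_m(x)-P_m(x_i)\|+\|(I-P_m)(x)\|<\varepsilon+r_m\le r+2\varepsilon,
\]
which shows $Q\subset\bigcup_{i}B\bigl(P_m(x_i),r+2\varepsilon\bigr)$, hence $\chi(Q)\le r+2\varepsilon$, and $\chi(Q)\le r$ upon letting $\varepsilon\to 0$. With that one-line correction your argument is complete and coincides with the standard proof of the cited theorem.
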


Let $X$ and $Y$ be Banach spaces. Then, a linear operator $L:X\rightarrow Y$
is said to be $\mathit{compact}$ if the domain of $L$ is all of $X$ and $L(Q)$
is a totally bounded subset of $Y$ for every $Q\in M_{X}$. Equivalently, we
say that $L$ is compact if its domain is all of $X$ and for every bounded
sequence $\left(  x_{n}\right)  $ in $X,$ the sequence $\left(  L\left(
x_{n}\right)  \right)  $ has a convergent subsequence in $Y.$

The idea of compact operators between Banach spaces is closely related to the
Hausdorff measure of noncompactness, and it can be given as follows:

Let $X$ and $Y$ be Banach spaces and $L\in B(X,Y)$. Then, the Hausdorff
measure of noncompactness of $L$, is denoted by $\left \Vert L\right \Vert
_{\chi}$, can be given by%
\begin{equation}
\left \Vert L\right \Vert _{\chi}=\chi(L(S_{X})) \tag{1.4}%
\end{equation}
and we have
\begin{equation}
L\text{ is compact if and only if }\left \Vert L\right \Vert _{\chi}=0.
\tag{1.5}%
\end{equation}

The Hausdorff measure of noncompactness has various applications in the theory
of sequence spaces, one of them is to obtain necessary and sufficient
conditions for matrix operators between $BK$ spaces to be compact. Recently,
several authors have studied compact operators on the sequence spaces and
given very important results related to the Hausdorff measure of
noncompactness of a linear operator. For example [25-38].

In this paper, we derive some identities for the Hausdorff measure of
noncompactness on the Fibonacci difference sequence spaces $\ell_{p}%
(\widehat{F})$ and $\ell_{\infty}(\widehat{F})$ defined by Kara [1]. We also
apply the Hausdorff measure of noncompactness to obtain the necessary and
sufficient conditions for such operators to be compact.

\section{\textbf{\ The Fibonacci Difference Sequence Spaces }$\ell
_{p}(\widehat{F})$\textbf{ and }$\ell_{\infty}(\widehat{F})$}

Throughout, let $1\leq p\leq \infty$ and $q$ denote the conjugate of $p$, that
is, $q=p/(p-1)$ for $1<p<\infty$, $q=\infty$ for $p=1$ or $q=1$ for
$p=\infty.$

Recently, Kara[1] has defined the Fibonacci difference sequence spaces
$\ell_{p}(\widehat{F})$ and $\ell_{\infty}(\widehat{F})$ by%
\[
\ell_{p}(\widehat{F})=\left \{  x=(x_{n})\in \omega:%
%TCIMACRO{\dsum \limits_{n}}%
%BeginExpansion
{\displaystyle \sum \limits_{n}}
%EndExpansion
\left \vert \frac{f_{n}}{f_{n+1}}x_{n}-\frac{f_{n+1}}{f_{n}}x_{n-1}\right \vert
^{p}<\infty \right \}  ;\text{ }1\leq p<\infty
\]
and%
\[
\ell_{\infty}(\widehat{F})=\left \{  x=(x_{n})\in \omega:\sup_{n\in%
%TCIMACRO{\U{2115} }%
%BeginExpansion
\mathbb{N}
%EndExpansion
}\left \vert \frac{f_{n}}{f_{n+1}}x_{n}-\frac{f_{n+1}}{f_{n}}x_{n-1}\right \vert
<\infty \right \}  .\text{ \  \  \  \  \  \  \  \  \  \  \  \  \  \  \  \  \ }%
\]

With the notation of (1.2), the sequence spaces $\ell_{p}(\widehat{F})$ and
$\ell_{\infty}(\widehat{F})$ may be redefined by
\begin{equation}
\ell_{p}(\widehat{F})=(\ell_{p})_{\widehat{F}}\text{ }(1\leq p<\infty)\text{
\ and \ }\ell_{\infty}(\widehat{F})=(\ell_{\infty})_{\widehat{F}}, \tag{2.1}%
\end{equation}
where the matrix $\widehat{F}=(\widehat{f}_{nk})$ is defined by%
\begin{equation}
\widehat{f}_{nk}=\left \{
\begin{array}
[c]{cc}%
-\frac{f_{n+1}}{f_{n}} & (k=n-1)\\
\frac{f_{n}}{f_{n+1}} & (k=n)\\
0 & (0\leq k<n-1)\text{ or }(k>n)
\end{array}
\right.  ;\text{ }(n,k\in%
%TCIMACRO{\U{2115} }%
%BeginExpansion
\mathbb{N}
%EndExpansion
). \tag{2.2}%
\end{equation}
Further, it is clear that the spaces $\ell_{p}(\widehat{F})$ and $\ell
_{\infty}(\widehat{F})$ are\ $BK$ spaces with the norms given by
\begin{equation}
\left \Vert x\right \Vert _{\ell_{p}(\widehat{F})}=\left(  \sum_{n}\left \vert
y_{n}(x)\right \vert ^{p}\right)  ^{1/p}\text{ ; \ }(1\leq p<\infty)\text{
\ and }\left \Vert x\right \Vert _{\ell_{\infty}(\widehat{F})}=\sup_{n\in%
%TCIMACRO{\U{2115} }%
%BeginExpansion
\mathbb{N}
%EndExpansion
}\left \vert y_{n}(x)\right \vert , \tag{2.3}%
\end{equation}
where the sequence $y=(y_{n})=(\widehat{F}_{n}(x))$ is the $\widehat{F}%
$-transform of a sequence $x=(x_{n})$, i.e.,
\begin{equation}
y_{n}=\widehat{F}_{n}(x)=\left \{
\begin{array}
[c]{cc}%
\frac{f_{0}}{f_{1}}x_{0}=x_{0}\text{ \  \  \  \ } & (n=0)\\
\frac{f_{n}}{f_{n+1}}x_{n}-\frac{f_{n+1}}{f_{n}}x_{n-1} & (n\geq1)
\end{array}
\right.  \text{ };\text{ }(n\in%
%TCIMACRO{\U{2115} }%
%BeginExpansion
\mathbb{N}
%EndExpansion
). \tag{2.4}%
\end{equation}

Moreover, it is obvious by (2.2) that $\widehat{F}$ is a triangle. Thus, it
has a unique inverse $\widehat{F}^{-1}$ which is also a triangle and the
entries of $\widehat{F}^{-1}$ are given by%
\begin{equation}
\widehat{f}_{nk}^{-1}=\left \{
\begin{array}
[c]{cc}%
\frac{f_{n+1}^{2}}{f_{k}f_{k+1}} & (0\leq k\leq n)\\
0 & (k>n)
\end{array}
\right.  \tag{2.5}%
\end{equation}
for all $n,k\in%
%TCIMACRO{\U{2115} }%
%BeginExpansion
\mathbb{N}
%EndExpansion
$. Therefore, we have by (2.4) that%
\begin{equation}
x_{n}=\sum_{k=0}^{n}\frac{f_{n+1}^{2}}{f_{k}f_{k+1}}y_{k}\text{ ; }(n\in%
%TCIMACRO{\U{2115} }%
%BeginExpansion
\mathbb{N}
%EndExpansion
). \tag{2.6}%
\end{equation}

In [1], the $\beta$-duals of the sequence spaces $\ell_{p}(\widehat{F})$
$(1\leq p<\infty)$ and $\ell_{\infty}(\widehat{F})$ have been determined and
some related matrix classes characterized. Now, by taking into account that
the inverse of $\widehat{F}$ is given by (2.5), we have the following lemma
which is immediate by [1, Theorem 4.6].

\begin{lemma}
Let $1\leq p\leq \infty$. If $a=(a_{k})\in \{ \ell_{p}(\widehat{F})\}^{\beta}$,
then $\bar{a}=(\bar{a}_{k})\in \ell_{q}$ and we have
\begin{equation}
\sum \limits_{k}a_{k}x_{k}=\sum \limits_{k}\bar{a}_{k}y_{k} \tag{2.7}%
\end{equation}
for all $x=(x_{k})\in \ell_{p}(\widehat{F})$ with $y=\widehat{F}x$, where
\begin{equation}
\bar{a}_{k}=%
%TCIMACRO{\dsum \limits_{j=k}^{\infty}}%
%BeginExpansion
{\displaystyle \sum \limits_{j=k}^{\infty}}
%EndExpansion
\frac{f_{j+1}^{2}}{f_{k}f_{k+1}}a_{j}\text{ ; }(k\in%
%TCIMACRO{\U{2115} }%
%BeginExpansion
\mathbb{N}
%EndExpansion
). \tag{2.8}%
\end{equation}

\end{lemma}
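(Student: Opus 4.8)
The plan is to derive everything from the $\beta$-dual characterization in [1, Theorem 4.6] together with the fact that $\widehat{F}$ is a triangle, so that the correspondence $x\mapsto y=\widehat{F}x$ is a bijection from $\ell_{p}(\widehat{F})$ onto $\ell_{p}$ whose inverse is given explicitly by (2.6). First I would fix $a=(a_{k})\in\{\ell_{p}(\widehat{F})\}^{\beta}$ and an arbitrary $x\in\ell_{p}(\widehat{F})$, set $y=\widehat{F}x\in\ell_{p}$, and insert the representation $x_{j}=\sum_{k=0}^{j}\frac{f_{j+1}^{2}}{f_{k}f_{k+1}}y_{k}$ from (2.6) into the finite partial sum $\sum_{j=0}^{n}a_{j}x_{j}$. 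Since every sum in sight is finite, this substitution is legitimate for each fixed $n$, and no convergence question arises yet.

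Interchanging the order of summation in this finite double sum then yields $\sum_{j=0}^{n}a_{j}x_{j}=\sum_{k=0}^{n}b_{nk}y_{k}$, where $b_{nk}=\sum_{j=k}^{n}\frac{f_{j+1}^{2}}{f_{k}f_{k+1}}a_{j}$ for $0\leq k\leq n$. Writing $B=(b_{nk})$, the right-hand side is precisely the $n$-th term $(By)_{n}$ of the $B$-transform of $y$. Because $x\mapsto y$ is onto $\ell_{p}$ (for any $y\in\ell_{p}$ one has $x=\widehat{F}^{-1}y\in\ell_{p}(\widehat{F})$), the hypothesis $a\in\{\ell_{p}(\widehat{F})\}^{\beta}$, i.e. convergence of $\sum_{j}a_{j}x_{j}$ for every $x\in\ell_{p}(\widehat{F})$, is equivalent to the statement that $(By)_{n}$ converges as $n\to\infty$ for every $y\in\ell_{p}$; that is, $B\in(\ell_{p},c)$. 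The same reduction works uniformly for $1\leq p\leq\infty$, with $(\ell_{\infty},c)$ in the case $p=\infty$.

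Now [1, Theorem 4.6], read through the inverse entries (2.5), is exactly the characterization of $B\in(\ell_{p},c)$, and its content delivers two facts simultaneously: each column limit $\bar{a}_{k}=\lim_{n}b_{nk}$ exists and equals $\sum_{j=k}^{\infty}\frac{f_{j+1}^{2}}{f_{k}f_{k+1}}a_{j}$, which is formula (2.8), and the limit sequence $\bar{a}=(\bar{a}_{k})$ belongs to $\ell_{q}=\ell_{p}^{\beta}$ (Lemma 1.1 identifying the dual). To obtain the identity (2.7) it then remains to let $n\to\infty$ in $\sum_{j=0}^{n}a_{j}x_{j}=\sum_{k=0}^{n}b_{nk}y_{k}$: the left side converges by hypothesis, and on the right I would split $\sum_{k=0}^{n}b_{nk}y_{k}=\sum_{k=0}^{n}\bar{a}_{k}y_{k}+\sum_{k=0}^{n}(b_{nk}-\bar{a}_{k})y_{k}$, the first term tending to $\sum_{k}\bar{a}_{k}y_{k}$ by Hölder's inequality since $\bar{a}\in\ell_{q}$ and $y\in\ell_{p}$.

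The main obstacle will be controlling the second term: it does not vanish termwise, and establishing $\sum_{k=0}^{n}(b_{nk}-\bar{a}_{k})y_{k}\to 0$ is the genuine analytic step. I expect to handle it exactly as in the standard theory of $(\ell_{p},c)$, namely through the uniform-boundedness condition $\sup_{n}\|B_{n}\|_{\ell_{p}}^{\ast}<\infty$ (equivalently $\sup_{n}\|B_{n}\|_{q}<\infty$, by Lemma 1.1), which is part of the characterization invoked above and which forces $\lim_{n}(By)_{n}=\sum_{k}\bar{a}_{k}y_{k}$. This is precisely where the weight of [1, Theorem 4.6] is used rather than the merely formal rearrangement of the finite double sum, and once it is in place the identity (2.7) and the membership $\bar{a}\in\ell_{q}$ follow together.
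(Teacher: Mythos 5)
Your reduction is the right one, and it is in fact the argument the paper suppresses entirely: the paper gives no proof of this lemma, declaring it ``immediate by [1, Theorem 4.6],'' so your reconstruction---substituting (2.6) into the finite partial sums, interchanging the finite double sum to get $\sum_{j=0}^{n}a_{j}x_{j}=(By)_{n}$ with $b_{nk}=\sum_{j=k}^{n}\frac{f_{j+1}^{2}}{f_{k}f_{k+1}}a_{j}$, and using that $x\mapsto\widehat{F}x$ is a bijection of $\ell_{p}(\widehat{F})$ onto $\ell_{p}$ to translate $a\in\{\ell_{p}(\widehat{F})\}^{\beta}$ into $B\in(\ell_{p},c)$---is exactly the standard route. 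For $1\leq p<\infty$ your closing step is also sound: the column limits give (2.8), a Fatou-type argument gives $\bar{a}\in\ell_{q}$, and the term $\sum_{k=0}^{n}(b_{nk}-\bar{a}_{k})y_{k}$ is killed by splitting at a large index $K$, letting the finitely many leading terms go to zero columnwise, and bounding the tail by H\"older against $\sup_{n}\left\Vert B_{n}\right\Vert _{q}+\left\Vert \bar{a}\right\Vert _{q}$ times the small quantity $\left(\sum_{k>K}\left\vert y_{k}\right\vert ^{p}\right)^{1/p}$.

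The genuine gap is the case $p=\infty$, which the lemma explicitly includes (and which the paper later needs, in Lemma 2.2(a) and Theorem 3.3). You claim that the uniform-boundedness condition $\sup_{n}\left\Vert B_{n}\right\Vert _{\ell_{\infty}}^{\ast}=\sup_{n}\sum_{k}\left\vert b_{nk}\right\vert <\infty$, together with the column limits, ``forces'' $\lim_{n}(By)_{n}=\sum_{k}\bar{a}_{k}y_{k}$. That implication is false on $\ell_{\infty}$: take $b_{nk}=1$ for $k=n$ and $b_{nk}=0$ otherwise; the rows are uniformly bounded in $\ell_{1}$, every column tends to $0$, yet $(By)_{n}=y_{n}$ need not converge for $y\in\ell_{\infty}$, and when it does converge (e.g.\ for $y=e$) the limit is not $\sum_{k}0\cdot y_{k}=0$. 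The tail argument that rescues the case $p<\infty$ is unavailable here because $\ell_{\infty}$ lacks $AK$: the tails of $y$ are not small in norm. What actually closes the argument for $p=\infty$ is Schur's theorem: $B\in(\ell_{\infty},c)$ is equivalent to the existence of the column limits together with the \emph{uniform} convergence of $\sum_{k}\left\vert b_{nk}\right\vert $ in $n$, which yields $\lim_{n}\sum_{k}\left\vert b_{nk}-\bar{a}_{k}\right\vert =0$ and hence $\left\vert \sum_{k=0}^{n}(b_{nk}-\bar{a}_{k})y_{k}\right\vert \leq\left\Vert y\right\Vert _{\infty}\sum_{k=0}^{n}\left\vert b_{nk}-\bar{a}_{k}\right\vert \rightarrow0$. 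So in the $p=\infty$ case you must invoke this Schur-type condition from the characterization of $(\ell_{\infty},c)$, not mere uniform boundedness of the rows; with that substitution your proof is complete.
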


Now, we prove the following results which will be needed in the sequel.

\begin{lemma}
Let $1<p<\infty,$ $q=p/(p-1)$ and $\bar{a}=(\bar{a}_{k})$ be the sequence
defined by (2.8) Then, we have

(a) If $a=(a_{k})\in \{ \ell_{\infty}(\widehat{F})\}^{\beta}$, then $\left \Vert
a\right \Vert _{\ell_{\infty}(\widehat{F})}^{\ast}=\sum_{k}\left \vert \bar
{a}_{k}\right \vert <\infty.$

(b) If $a=(a_{k})\in \{ \ell_{1}(\widehat{F})\}^{\beta}$, then $\left \Vert
a\right \Vert _{\ell_{1}(\widehat{F})}^{\ast}=\sup_{k}\left \vert \bar{a}%
_{k}\right \vert <\infty.$

(c) If $a=(a_{k})\in \{ \ell_{p}(\widehat{F})\}^{\beta}$, then $\left \Vert
a\right \Vert _{\ell_{p}(\widehat{F})}^{\ast}=\left(  \sum_{k}\left \vert
\bar{a}_{k}\right \vert ^{q}\right)  ^{1/q}<\infty.$
\end{lemma}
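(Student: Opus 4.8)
The plan is to treat all three parts by a single device: the $\widehat{F}$-transform $x \mapsto y = \widehat{F}x$ is a norm-preserving linear bijection of $\ell_p(\widehat{F})$ onto $\ell_p$ (and of $\ell_\infty(\widehat{F})$ onto $\ell_\infty$), so the computation of $\|a\|^*$ can be transported to the classical spaces $\ell_p$, where Lemma 1.1 applies directly. First I would fix $1 \le p \le \infty$ and $a = (a_k) \in \{\ell_p(\widehat{F})\}^\beta$. By the definition (1.3),
\[
\|a\|_{\ell_p(\widehat{F})}^{*} = \sup_{x \in S_{\ell_p(\widehat{F})}} \left| \sum_k a_k x_k \right| ,
\]
and by the identity (2.7) this equals $\sup_{x \in S_{\ell_p(\widehat{F})}} \left| \sum_k \bar{a}_k y_k \right|$, where $y = \widehat{F}x$ and $\bar{a}$ is the sequence given by (2.8).

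The key step is to re-index this supremum. Since $\widehat{F}$ is a triangle with triangle inverse (2.5), the map $x \mapsto y = \widehat{F}x$ is a bijection of $\ell_p(\widehat{F})$ onto $\ell_p$, and by the definition of the norm in (2.3) it satisfies $\|x\|_{\ell_p(\widehat{F})} = \|y\|_{\ell_p}$; hence it carries $S_{\ell_p(\widehat{F})}$ bijectively onto $S_{\ell_p}$. Consequently the supremum over $x \in S_{\ell_p(\widehat{F})}$ coincides with the supremum over $y \in S_{\ell_p}$, which yields $\|a\|_{\ell_p(\widehat{F})}^{*} = \|\bar{a}\|_{\ell_p}^{*}$.

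Finally I would invoke Lemma 1.1. Since $\bar{a} \in \ell_q$ by the transfer lemma, we have $\|\bar{a}\|_{\ell_p}^{*} = \|\bar{a}\|_{\ell_q}$, and specializing $p$ gives the three asserted formulas: $\sum_k |\bar{a}_k|$ for $p = \infty$ in part (a), $\sup_k |\bar{a}_k|$ for $p = 1$ in part (b), and $\left( \sum_k |\bar{a}_k|^q \right)^{1/q}$ for $1 < p < \infty$ in part (c); in each case finiteness is precisely the membership $\bar{a} \in \ell_q$ already supplied. The one point demanding genuine care is the re-indexing in the second paragraph: one must verify that $x \mapsto \widehat{F}x$ is a \emph{surjective} isometry onto $\ell_p$, not merely an isometric embedding, so that no unit vectors of $\ell_p$ are missed. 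This is exactly where the invertibility of the triangle $\widehat{F}$ through (2.5) is used, and everything else reduces to a routine appeal to the two cited lemmas.
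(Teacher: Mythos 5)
Your proof is correct and follows essentially the same route as the paper: both use the identity (2.7) from Lemma 2.1 together with the fact (via (2.3) and the invertibility of the triangle $\widehat{F}$) that $x\in S_{\ell_p(\widehat{F})}$ if and only if $y=\widehat{F}x\in S_{\ell_p}$, to conclude $\left\Vert a\right\Vert _{\ell_p(\widehat{F})}^{\ast}=\left\Vert \bar{a}\right\Vert _{\ell_p}^{\ast}$, and then apply Lemma 1.1. The only cosmetic difference is that you handle all three cases uniformly in $p$, whereas the paper writes out part (a) and leaves (b) and (c) as analogous.
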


\begin{proof}
(a) Let $a=(a_{k})\in \{ \ell_{\infty}(\widehat{F})\}^{\beta}$. Then, it
follows by Lemma 2.1 that $\bar{a}=(\bar{a}_{k})\in \ell_{1}$ and the equality
(2.7) holds for all sequences $x=(x_{k})\in \ell_{\infty}(\widehat{F})$ and
$y=(y_{k})\in \ell_{\infty}$ which are connected by the relation $y=\widehat
{F}x$. Further, we have by (2.3) that $x\in S_{\ell_{\infty}(\widehat{F})}$ if
and only if $y\in S_{\ell_{\infty}}$. Therefore, we derive from (1.3) and
(2.7) that%
\[
\left \Vert a\right \Vert _{\ell_{\infty}(\widehat{F})}^{\ast}=\sup_{x\in
S_{\ell_{\infty}(\widehat{F})}}\left \vert \sum \limits_{k}a_{k}x_{k}\right \vert
=\sup_{y\in S_{\ell_{\infty}}}\left \vert \sum \limits_{k}\bar{a}_{k}%
y_{k}\right \vert =\left \Vert \bar{a}\right \Vert _{\ell_{\infty}}^{\ast}.
\]
Hence, by using Lemma 1.1, we have that%
\[
\left \Vert a\right \Vert _{\ell_{\infty}(\widehat{F})}^{\ast}=\left \Vert
\bar{a}\right \Vert _{\ell_{\infty}}^{\ast}=\left \Vert \bar{a}\right \Vert
_{\ell_{1}}.
\]
This completes the proof of part (a).

Since parts (b) and (c) can also be proved by analogy with part (a), we leave
the detailed proof to the reader.
\end{proof}

Throughout this paper, if $A=(a_{nk})$ is an infinite matrix, we define the
associated matrix defined $\bar{A}=(\bar{a}_{nk})$\ by
\begin{equation}
\bar{a}_{nk}=%
%TCIMACRO{\dsum \limits_{j=k}^{\infty}}%
%BeginExpansion
{\displaystyle \sum \limits_{j=k}^{\infty}}
%EndExpansion
\frac{f_{j+1}^{2}}{f_{k}f_{k+1}}a_{nj}\text{; }(n,k\in%
%TCIMACRO{\U{2115} }%
%BeginExpansion
\mathbb{N}
%EndExpansion
) \tag{2.9}%
\end{equation}
provided the series on the right side converges for all $n,k\in%
%TCIMACRO{\U{2115} }%
%BeginExpansion
\mathbb{N}
%EndExpansion
$ which is the case whenever $A_{n}\in \{ \ell_{p}(\widehat{F})\}^{\beta}$ for
all $n\in%
%TCIMACRO{\U{2115} }%
%BeginExpansion
\mathbb{N}
%EndExpansion
$, where $1\leq p\leq \infty$. Then, we have:

\begin{lemma}
Let $1\leq p\leq \infty$, $X$ be a sequence space and $A=(a_{nk})$ be an
infinite matrix. If $A\in(\ell_{p}(\widehat{F}),X)$, then $\bar{A}\in(\ell
_{p},X)$ such that $Ax=\bar{A}y$ for all $x\in \ell_{p}(\widehat{F})$ with
$y=\widehat{F}x,$ where $\bar{A}=(\bar{a}_{nk})$ is the associated matrix
defined by (2.9).
\end{lemma}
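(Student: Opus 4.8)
The plan is to unpack the statement $A\in(\ell_p(\widehat F),X)$, which by definition means $A_n\in\{\ell_p(\widehat F)\}^\beta$ for all $n$ and $Ax\in X$ for all $x\in\ell_p(\widehat F)$, and to translate each condition into the corresponding statement about $\bar A$ acting on $\ell_p$. First I would fix an arbitrary $x\in\ell_p(\widehat F)$ and set $y=\widehat F x$, so that $y\in\ell_p$; conversely, since $\widehat F$ is a triangle with inverse given by (2.5), every $y\in\ell_p$ arises this way from a unique $x=\widehat F^{-1}y\in\ell_p(\widehat F)$, and (2.3) shows this correspondence is an isometric bijection between $\ell_p(\widehat F)$ and $\ell_p$. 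This bijection is the backbone of the whole argument.

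Next I would apply Lemma 2.1 row by row. Since $A_n=(a_{nk})\in\{\ell_p(\widehat F)\}^\beta$, Lemma 2.1 gives that the associated row $\bar A_n=(\bar a_{nk})$, defined exactly by (2.8) applied to $A_n$ (which is (2.9)), lies in $\ell_q$, and moreover
\begin{equation}
A_n(x)=\sum_k a_{nk}x_k=\sum_k \bar a_{nk}y_k=\bar A_n(y)\tag{$\ast$}
\end{equation}
for every $n$. The fact that $\bar A_n\in\ell_q=(\ell_p)^\beta$ already shows $\bar A_n\in(\ell_p)^\beta$ for all $n$, which is the first half of what $\bar A\in(\ell_p,X)$ requires. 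Assembling ($\ast$) over all $n$ yields the identity $Ax=\bar A y$ as sequences.

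Finally I would read off the second half. Since $Ax\in X$ by hypothesis and $Ax=\bar Ay$, we get $\bar A y\in X$; and because $y$ ranges over all of $\ell_p$ as $x$ ranges over all of $\ell_p(\widehat F)$ (by the bijection established in the first step), this says precisely that $\bar A y\in X$ for every $y\in\ell_p$. Combined with $\bar A_n\in(\ell_p)^\beta$, this is exactly the assertion $\bar A\in(\ell_p,X)$, and the identity $Ax=\bar Ay$ has already been recorded.

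I do not expect any genuinely hard obstacle here, since the heavy lifting is done by Lemma 2.1 and the triangularity of $\widehat F$. The one point demanding a little care is the \emph{surjectivity} direction of the correspondence between $\ell_p$ and $\ell_p(\widehat F)$: to conclude $\bar Ay\in X$ for \emph{all} $y\in\ell_p$ (not merely for those $y$ in the image of some restricted set), I must verify that $\widehat F$ maps $\ell_p(\widehat F)$ onto $\ell_p$, which follows from (2.6) since $\widehat F^{-1}y$ is a well-defined sequence in $\ell_p(\widehat F)$ for any $y\in\ell_p$. A secondary bookkeeping point is ensuring the interchange of summation implicit in passing from $\sum_k a_{nk}x_k$ to $\sum_k \bar a_{nk}y_k$ is legitimate, but this is guaranteed termwise by Lemma 2.1 once $a\in\{\ell_p(\widehat F)\}^\beta$.
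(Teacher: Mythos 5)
Your proof is correct and follows essentially the same route as the paper's: apply Lemma 2.1 row by row to get $\bar A_n\in\ell_q$ and the identity $Ax=\bar Ay$, then use the fact that $\widehat F$ maps $\ell_p(\widehat F)$ onto $\ell_p$ to conclude $\bar A\in(\ell_p,X)$. Your explicit attention to the surjectivity of the correspondence $x\mapsto y=\widehat Fx$ is exactly the point the paper also invokes (``since every $y\in\ell_p$ is the associated sequence of some $x\in\ell_p(\widehat F)$''), just spelled out in more detail.
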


\begin{proof}
Suppose that $A\in(\ell_{p}(\widehat{F}),X)$ and let $x\in \ell_{p}(\widehat
{F})$. Then $A_{n}\in \{ \ell_{p}(\widehat{F})\}^{\beta}$ for all $n\in%
%TCIMACRO{\U{2115} }%
%BeginExpansion
\mathbb{N}
%EndExpansion
$. Thus, it follows by Lemma 2.1 that $\bar{A}_{n}\in \ell_{q}$ for all $n\in%
%TCIMACRO{\U{2115} }%
%BeginExpansion
\mathbb{N}
%EndExpansion
$ and the equality $Ax=\bar{A}y$ holds which yields that $\bar{A}y\in X$,
where $y=\widehat{F}x.$ Since every $y\in \ell_{p}$ is the assocaited sequence
of some $x\in \ell_{p}(\widehat{F})$, we obtain that $\bar{A}\in(\ell_{p},X)$.
This concludes the proof.
\end{proof}

\begin{lemma}
Let $1\leq p<\infty.$ If $A\in(\ell_{1}(\widehat{F}),\ell_{p})$, then
\[
\left \Vert L_{A}\right \Vert =\left \Vert A\right \Vert _{(\ell_{1}(\widehat
{F}),\ell_{p})}=\sup_{k}\left(
%TCIMACRO{\dsum \limits_{n}}%
%BeginExpansion
{\displaystyle \sum \limits_{n}}
%EndExpansion
\left \vert \bar{a}_{nk}\right \vert ^{p}\right)  ^{1/p}<\infty.
\]

\end{lemma}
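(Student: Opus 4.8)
The plan is to transfer the computation from $\ell_1(\widehat{F})$ to the ordinary space $\ell_1$ by means of the associated matrix $\bar{A}$, and then to evaluate the operator norm of $\bar{A}\colon\ell_1\to\ell_p$ directly from its columns.

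First I would invoke Lemma 2.3 with $X=\ell_p$: since $A\in(\ell_1(\widehat{F}),\ell_p)$, the associated matrix $\bar{A}=(\bar{a}_{nk})$ defined by (2.9) satisfies $\bar{A}\in(\ell_1,\ell_p)$ and $Ax=\bar{A}y$ for every $x\in\ell_1(\widehat{F})$ with $y=\widehat{F}x$. By Lemma 1.2 applied to the $BK$ spaces $\ell_1$ and $\ell_p$, the operator $L_{\bar{A}}$ is bounded, so its norm is finite; this will yield the final strict inequality $<\infty$. The key observation is that $\widehat{F}$ acts as a surjective isometry between the unit spheres: by the norm (2.3) we have $\|x\|_{\ell_1(\widehat{F})}=\|y\|_{\ell_1}$, and by the argument in the proof of Lemma 2.3 every $y\in\ell_1$ arises as $y=\widehat{F}x$ for some $x\in\ell_1(\widehat{F})$. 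Hence $x\in S_{\ell_1(\widehat{F})}$ if and only if $y\in S_{\ell_1}$, and therefore
\[
\|L_A\|=\sup_{x\in S_{\ell_1(\widehat{F})}}\|Ax\|_{\ell_p}=\sup_{y\in S_{\ell_1}}\|\bar{A}y\|_{\ell_p}=\|L_{\bar{A}}\|.
\]

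It then remains to show $\|L_{\bar{A}}\|=\sup_k(\sum_n|\bar{a}_{nk}|^p)^{1/p}$, which is the standard formula for the norm of an operator from $\ell_1$ into $\ell_p$. Writing $\bar{a}^{(k)}=(\bar{a}_{nk})_n$ for the $k$th column, I would decompose $\bar{A}y=\sum_k y_k\,\bar{a}^{(k)}$ and apply Minkowski's inequality (for possibly infinite sums) to obtain $\|\bar{A}y\|_{\ell_p}\le\sum_k|y_k|\,\|\bar{a}^{(k)}\|_{\ell_p}\le(\sup_k\|\bar{a}^{(k)}\|_{\ell_p})\,\|y\|_{\ell_1}$, giving the upper bound $\|L_{\bar{A}}\|\le\sup_k\|\bar{a}^{(k)}\|_{\ell_p}$. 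For the reverse inequality I would test on the unit vectors $e^{(k)}\in S_{\ell_1}$, for which $\bar{A}e^{(k)}=\bar{a}^{(k)}$ and hence $\|L_{\bar{A}}\|\ge\|\bar{a}^{(k)}\|_{\ell_p}$ for every $k$; taking the supremum over $k$ closes the estimate.

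The routine portions are the two inequalities above; the point requiring the most care is justifying that the column expansion $\bar{A}y=\sum_k y_k\,\bar{a}^{(k)}$ converges in the $\ell_p$-norm and that the Minkowski inequality may legitimately be applied to this infinite sum. I would handle this by first establishing the estimate for finitely supported $y$ (where no convergence issue arises) and then passing to the limit using the density of $\varphi$ in $\ell_1$ together with the continuity of $L_{\bar{A}}$ guaranteed by Lemma 1.2.
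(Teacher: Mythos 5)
Your proof is correct. The paper offers no argument for this lemma at all --- it simply declares the proof ``elementary and left to the reader'' --- and your writeup supplies exactly the standard route that is clearly intended: transfer to the associated matrix $\bar{A}\in(\ell_{1},\ell_{p})$ via Lemma 2.3 together with the fact from (2.3) that $\widehat{F}$ carries $S_{\ell_{1}(\widehat{F})}$ onto $S_{\ell_{1}}$, and then the classical column formula $\|L_{\bar{A}}\|=\sup_{k}\bigl(\sum_{n}|\bar{a}_{nk}|^{p}\bigr)^{1/p}$ for operators from $\ell_{1}$ into $\ell_{p}$, proved by Minkowski's inequality on one side and by testing on the unit vectors $e^{(k)}$ on the other. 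Your attention to the convergence of the column expansion (first for finitely supported $y$, then by density of $\varphi$ in $\ell_{1}$ and boundedness of $L_{\bar{A}}$ from Lemma 1.2) is exactly the care the phrase ``elementary'' glosses over, so your proof is a complete substitute for the omitted one.
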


\begin{proof}
The proof is elementary and left to the reader.
\end{proof}

\section{\textbf{Compact Operators on the spaces }$\ell_{p}(\widehat{F}%
)$\textbf{\ and }$\ell_{\infty}(\widehat{F})$}

In this section, we give some classes of compact operators on the spaces
$\ell_{p}(\widehat{F})$ and $\ell_{\infty}(\widehat{F}),$ where $1\leq
p<\infty$.

The following lemma gives necessary and sufficient conditions for a matrix
transformation from a $BK$ space $X$ to $c_{0}$, $c$ and $\ell_{\infty}$ to be
compact (the only sufficient condition for $\ell_{\infty}$).

\begin{lemma}
[{[25, Theorem 3.7]}]Let $X\supset \varphi$ be a BK space. Then, we have

(a) If $A\in(X,\ell_{\infty})$, then%
\[
0\leq \left \Vert L_{A}\right \Vert _{\chi}\leq \underset{n}{\lim \sup}\left \Vert
A_{n}\right \Vert _{X}^{\ast}.
\]

(b) If $A\in(X,c_{0})$, then%
\[
\left \Vert L_{A}\right \Vert _{\chi}=\underset{n}{\lim \sup}\left \Vert
A_{n}\right \Vert _{X}^{\ast}.
\]

(c) If $X$ has $AK$ or $X=\ell_{\infty}$ and $A\in(X,c)$, then
\[
\frac{1}{2}.\underset{n}{\lim \sup}\left \Vert A_{n}-\alpha \right \Vert
_{X}^{\ast}\leq \left \Vert L_{A}\right \Vert _{\chi}\leq \underset{n}{\lim \sup
}\left \Vert A_{n}-\alpha \right \Vert _{X}^{\ast},
\]
where $\alpha=(\alpha_{k})$ with $\alpha_{k}=\lim_{n}a_{nk}$ for all $k\in%
%TCIMACRO{\U{2115} }%
%BeginExpansion
\mathbb{N}
%EndExpansion
$.
\end{lemma}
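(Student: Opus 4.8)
The plan is to reduce all three parts to the defining identity $\|L_A\|_\chi=\chi(A(S_X))$ from (1.4), and then to feed in the measure-of-noncompactness formula appropriate to each codomain together with a single uniform computation. Let $P_m$ denote the projection on the codomain sending $z=(z_n)$ to $(z_0,\dots,z_m,0,0,\dots)$. The fact I will use repeatedly is that, for $x\in S_X$, the tail norm of $Ax$ reads off the individual functional norms:
\[
\sup_{x\in S_X}\|(I-P_m)(Ax)\|_\infty=\sup_{x\in S_X}\sup_{n>m}|A_n(x)|=\sup_{n>m}\|A_n\|_X^\ast,
\]
obtained by interchanging the two suprema and recalling (1.3). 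Since $\sup_n\|A_n\|_X^\ast<\infty$ by Lemma 1.3, letting $m\to\infty$ turns the right-hand side into $\limsup_n\|A_n\|_X^\ast$.

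For part (b), where $A\in(X,c_0)$, I would invoke the analogue of Lemma 1.4 for $c_0$: since $(e^{(k)})$ is a Schauder basis of $c_0$ with $\|I-P_m\|=1$, the Hausdorff measure of a bounded set $Q\subset c_0$ equals $\lim_m\sup_{z\in Q}\|(I-P_m)z\|_\infty$, a basic property of $\chi$ from [3]. Applying this to $Q=A(S_X)$ and inserting the displayed computation yields the equality $\|L_A\|_\chi=\limsup_n\|A_n\|_X^\ast$ at once. For part (a), where $A\in(X,\ell_\infty)$, there is no basis, so I would argue directly: for each $m$ the set $P_m(A(S_X))$ lies in a finite-dimensional subspace and hence has $\chi=0$, while $(I-P_m)(A(S_X))$ is contained in a ball of radius $\sup_{x\in S_X}\|(I-P_m)(Ax)\|_\infty$. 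Subadditivity and monotonicity of $\chi$ then give $\chi(A(S_X))\le\sup_{n>m}\|A_n\|_X^\ast$ for every $m$, and letting $m\to\infty$ produces the upper bound; the lower bound $0\le\|L_A\|_\chi$ is trivial.

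Part (c) is the substantive one and is where the two-sided estimate originates. Here $A\in(X,c)$, so for each $x\in X$ the limit $\lim_n A_n(x)$ exists, and the first step is to identify it as $\alpha(x):=\sum_k\alpha_k x_k$ with $\alpha_k=\lim_n a_{nk}$. When $X$ has $AK$ this follows by expanding $x=\sum_k x_k e^{(k)}$ and using continuity of the limit functional, while for $X=\ell_\infty$ it is the classical fact that the limit functional of a matrix in $(\ell_\infty,c)$ is represented by $\alpha\in\ell_1=\ell_\infty^\beta$. The correct projection on $c$ is now $P_m'z=(\lim_j z_j)e+\sum_{k\le m}(z_k-\lim_j z_j)e^{(k)}$, adapted to the Schauder basis $\{e,e^{(0)},e^{(1)},\dots\}$ of $c$; one checks $\|I-P_m'\|=2$, and the Goldenstein--Gohberg--Markus estimate gives
\[
\tfrac12\limsup_m\sup_{z\in Q}\|(I-P_m')z\|_\infty\le\chi(Q)\le\limsup_m\sup_{z\in Q}\|(I-P_m')z\|_\infty.
\]
Taking $Q=A(S_X)$ and noting $\|(I-P_m')(Ax)\|_\infty=\sup_{n>m}|A_n(x)-\alpha(x)|=\sup_{n>m}|(A_n-\alpha)(x)|$, the same supremum-interchange as above replaces the middle quantities by $\sup_{n>m}\|A_n-\alpha\|_X^\ast$, whose limit is $\limsup_n\|A_n-\alpha\|_X^\ast$, giving the claimed bounds.

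I expect the main obstacle to be exactly the two ingredients of part (c): first, the representation $\lim_n A_n(x)=\sum_k\alpha_k x_k$, which is the only place where the hypothesis ``$X$ has $AK$ or $X=\ell_\infty$'' is genuinely used and which requires a legitimate interchange of limit and summation; and second, pinning down the basis constant $\|I-P_m'\|=2$ for the natural basis of $c$, since it is precisely this constant that produces the factor $\tfrac12$ in the lower estimate and distinguishes (c) from the clean equality in (b).
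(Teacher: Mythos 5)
The paper does not prove this lemma at all: it is imported verbatim as [25, Theorem 3.7] (Mursaleen--Noman), so there is no internal proof to compare against. Your reconstruction is correct and is essentially the argument of that source: reduce to $\chi(A(S_X))$ via (1.4), use the projection/subadditivity estimate for $\ell_\infty$, the exact Goldenstein--Gohberg--Markus formula for the basis of $c_0$ (where $\Vert I-P_m\Vert=1$), and for $c$ the basis $\{e,e^{(0)},e^{(1)},\dots\}$ with $\Vert I-P_m'\Vert=2$ producing the factor $\tfrac{1}{2}$, together with the representation $\lim_n A_n(x)=\sum_k\alpha_k x_k$, which is exactly where the hypothesis ``$X$ has $AK$ or $X=\ell_\infty$'' enters (via norm-convergence of the sections in the $AK$ case, and via the classical Schur-type characterization of $(\ell_\infty,c)$ in the other). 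The two obstacles you flag are indeed the substantive points, and your treatment of both is sound.
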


Now, let $A=(a_{nk})$ be an infinite matrix and $\bar{A}=(\bar{a}_{nk})$ the
associated matrix defined by (2.9). Then, by combining Lemmas 2.2, 2.3 and
3.1, we have the following result:

\begin{theorem}
Let $1<p<\infty$ and $q=p/(p-1)$. Then we have:

(a) If $A\in(\ell_{p}(\widehat{F}),\ell_{\infty})$, then%
\begin{equation}
0\leq \left \Vert L_{A}\right \Vert _{\chi}\leq \underset{n}{\lim \sup}\left(
%TCIMACRO{\dsum \limits_{k}}%
%BeginExpansion
{\displaystyle \sum \limits_{k}}
%EndExpansion
\left \vert \bar{a}_{nk}\right \vert ^{q}\right)  ^{1/q} \tag{3.1}%
\end{equation}
and
\begin{equation}
L_{A}\text{ is compact if }\underset{n}{\lim}\left(
%TCIMACRO{\dsum \limits_{k}}%
%BeginExpansion
{\displaystyle \sum \limits_{k}}
%EndExpansion
\left \vert \bar{a}_{nk}\right \vert ^{q}\right)  ^{1/q}=0. \tag{3.2}%
\end{equation}

(b) If $A\in(\ell_{p}(\widehat{F}),c_{0})$, then
\begin{equation}
\left \Vert L_{A}\right \Vert _{\chi}=\underset{n}{\lim \sup}\left(
%TCIMACRO{\dsum \limits_{k}}%
%BeginExpansion
{\displaystyle \sum \limits_{k}}
%EndExpansion
\left \vert \bar{a}_{nk}\right \vert ^{q}\right)  ^{1/q} \tag{3.3}%
\end{equation}
and
\begin{equation}
L_{A}\text{ is compact if and only if }\underset{n}{\lim}\left(
%TCIMACRO{\dsum \limits_{k}}%
%BeginExpansion
{\displaystyle \sum \limits_{k}}
%EndExpansion
\left \vert \bar{a}_{nk}\right \vert ^{q}\right)  ^{1/q}=0. \tag{3.4}%
\end{equation}

(c) If $A\in(\ell_{p}(\widehat{F}),c)$, then%
\begin{equation}
\frac{1}{2}.\underset{n}{\lim \sup}\left(
%TCIMACRO{\dsum \limits_{k}}%
%BeginExpansion
{\displaystyle \sum \limits_{k}}
%EndExpansion
\left \vert \bar{a}_{nk}-\bar{\alpha}_{k}\right \vert ^{q}\right)  ^{1/q}%
\leq \left \Vert L_{A}\right \Vert _{\chi}\leq \underset{n}{\lim \sup}\left(
%TCIMACRO{\dsum \limits_{k}}%
%BeginExpansion
{\displaystyle \sum \limits_{k}}
%EndExpansion
\left \vert \bar{a}_{nk}-\bar{\alpha}_{k}\right \vert ^{q}\right)  ^{1/q}
\tag{3.5}%
\end{equation}
and
\begin{equation}
L_{A}\text{ is compact if and only if }\underset{n}{\lim}\left(
%TCIMACRO{\dsum \limits_{k}}%
%BeginExpansion
{\displaystyle \sum \limits_{k}}
%EndExpansion
\left \vert \bar{a}_{nk}-\bar{\alpha}_{k}\right \vert ^{q}\right)  ^{1/q}=0,
\tag{3.6}%
\end{equation}
where $\bar{\alpha}=(\bar{\alpha}_{k})$ with $\bar{\alpha}_{k}=\lim_{n}\bar
{a}_{nk}$ for all $k\in%
%TCIMACRO{\U{2115} }%
%BeginExpansion
\mathbb{N}
%EndExpansion
$.
\end{theorem}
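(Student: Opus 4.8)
The plan is to transport the whole problem from the Fibonacci space $\ell_p(\widehat{F})$ to the classical space $\ell_p$ via the $\widehat{F}$-transform, and only then invoke Lemma 3.1. The key observation is that, by (2.3) together with the inversion formula (2.6), the map $\widehat{F}\colon\ell_p(\widehat{F})\to\ell_p$, $x\mapsto y=\widehat{F}x$, is a \emph{surjective linear isometry} carrying the unit sphere $S_{\ell_p(\widehat{F})}$ onto $S_{\ell_p}$. By Lemma 2.3 we have $\bar{A}\in(\ell_p,X)$ with $Ax=\bar{A}y$, that is $L_A=L_{\bar{A}}\circ\widehat{F}$; hence $L_A\bigl(S_{\ell_p(\widehat{F})}\bigr)=L_{\bar{A}}(S_{\ell_p})$, and since a surjective isometry leaves the Hausdorff measure of noncompactness unchanged, (1.4) gives
\[
\|L_A\|_\chi=\chi\bigl(L_A(S_{\ell_p(\widehat{F})})\bigr)=\chi\bigl(L_{\bar{A}}(S_{\ell_p})\bigr)=\|L_{\bar{A}}\|_\chi .
\]
This single identity reduces each of (a)--(c) to the corresponding assertion for the ordinary operator $L_{\bar{A}}$ on the $BK$-space $\ell_p$, which has $AK$.

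For parts (a) and (b) I would apply Lemma 3.1(a),(b) with $X=\ell_p$ to $\bar{A}$. By Lemma 1.1 the functional norm of each row is $\|\bar{A}_n\|_{\ell_p}^{\ast}=\|\bar{A}_n\|_{\ell_q}=\bigl(\sum_k|\bar{a}_{nk}|^q\bigr)^{1/q}$, so Lemma 3.1(a) gives the estimate (3.1) and Lemma 3.1(b) gives the equality (3.3). The compactness criteria (3.2) and (3.4) then follow from (1.5): in case (b) compactness is equivalent to $\|L_A\|_\chi=0$, which by (3.3) holds exactly when $\lim_n\bigl(\sum_k|\bar{a}_{nk}|^q\bigr)^{1/q}=0$ (a limit superior of nonnegative terms vanishes iff the corresponding limit does), while in case (a) the same hypothesis forces the upper bound in (3.1) to be $0$, hence $\|L_A\|_\chi=0$. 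These two parts could equally well be handled by applying Lemma 3.1 directly to $X=\ell_p(\widehat{F})$ and reading off the norms from Lemma 2.2, since they do not require the space to have $AK$.

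For part (c) we have $\bar{A}\in(\ell_p,c)$, so the column limits $\bar{\alpha}_k=\lim_n\bar{a}_{nk}$ exist; the uniform bound $\sup_n\|\bar{A}_n\|_{\ell_q}<\infty$ (valid since $(\ell_p,c)\subset(\ell_p,\ell_\infty)$, via Lemma 1.3) combined with Fatou's lemma yields $\bar{\alpha}=(\bar{\alpha}_k)\in\ell_q$. Because $\ell_p$ has $AK$, Lemma 3.1(c) applies with $X=\ell_p$ and limit vector $\bar{\alpha}$, and Lemma 1.1 identifies $\|\bar{A}_n-\bar{\alpha}\|_{\ell_p}^{\ast}=\bigl(\sum_k|\bar{a}_{nk}-\bar{\alpha}_k|^q\bigr)^{1/q}$; substituting into the sandwich of Lemma 3.1(c) gives (3.5), and (3.6) follows from (1.5) as before. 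The genuine obstacle here is that for part (c) the passage to $\ell_p$ is a \emph{necessity}, not a mere convenience: one cannot feed $X=\ell_p(\widehat{F})$ into Lemma 3.1(c), because $\ell_p(\widehat{F})$ fails to have $AK$ --- indeed, for the sequence $x$ with $\widehat{F}x=e^{(0)}$ one computes $\|x-x^{[m]}\|_{\ell_p(\widehat{F})}=f_{m+1}f_{m+2}\to\infty$ --- so the $AK$-hypothesis of Lemma 3.1(c) is available only after transferring to $\ell_p$. The points demanding the most care are therefore the verification that $\widehat{F}$ is onto $\ell_p$ with $\|x\|_{\ell_p(\widehat{F})}=\|y\|_{\ell_p}$ (which legitimizes the invariance $\|L_A\|_\chi=\|L_{\bar{A}}\|_\chi$) and the membership $\bar{\alpha}\in\ell_q$.
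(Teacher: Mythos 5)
Your proof is correct and follows essentially the same route as the paper's: identify the row norms via Lemma 2.2(c) (equivalently Lemma 1.1 applied to $\bar{A}_n$), use Lemma 2.3 together with the set identity $L_A\bigl(S_{\ell_p(\widehat{F})}\bigr)=L_{\bar{A}}\bigl(S_{\ell_p}\bigr)$ to conclude $\left\Vert L_A\right\Vert_\chi=\left\Vert L_{\bar{A}}\right\Vert_\chi$, and then apply Lemma 3.1 and (1.5). Your additional details --- the Fatou argument showing $\bar{\alpha}\in\ell_q$ and the explicit computation $\left\Vert x-x^{[m]}\right\Vert_{\ell_p(\widehat{F})}=f_{m+1}f_{m+2}\to\infty$ demonstrating that $\ell_p(\widehat{F})$ lacks $AK$ (hence why part (c) genuinely requires the transfer to $\ell_p$) --- are correct refinements of points the paper leaves implicit.
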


\begin{proof}
It is obvious that (3.2), (3.4) and (3.6) are respectively obtained from
(3.1), (3.3) and (3.5) by using (1.5). Thus, we have to proof (3.1), (3.3) and (3.5).

Let $A\in(\ell_{p}(\widehat{F}),\ell_{\infty})$ or $A\in(\ell_{p}(\widehat
{F}),c_{0}).$ Since $A_{n}\in \{ \ell_{p}(\widehat{F})\}^{\beta}$ for all $n\in%
%TCIMACRO{\U{2115} }%
%BeginExpansion
\mathbb{N}
%EndExpansion
$, we have from Lemma 2.2(c) that%
\begin{equation}
\left \Vert A_{n}\right \Vert _{\ell_{p}(\widehat{F})}^{\ast}=\left \Vert \bar
{A}_{n}\right \Vert _{\ell_{p}}^{\ast}=\left(
%TCIMACRO{\dsum \limits_{k}}%
%BeginExpansion
{\displaystyle \sum \limits_{k}}
%EndExpansion
\left \vert \bar{a}_{nk}\right \vert ^{q}\right)  ^{1/q} \tag{3.7}%
\end{equation}
for all $n\in%
%TCIMACRO{\U{2115} }%
%BeginExpansion
\mathbb{N}
%EndExpansion
$. Hence, by using the equation in (3.7), we get (3.1) and (3.3) from parts
(a) and (b) of Lemma 3.1, respectively.

To prove (3.5), we have $A\in(\ell_{p}(\widehat{F}),c)$ and hence $\bar{A}%
\in(\ell_{p},c)$ by Lemma 2.3. Therefore, it follows by part (c) of Lemma 3.1
with Lemma 1.1 that
\begin{equation}
\frac{1}{2}.\underset{n}{\lim \sup}\left \Vert \bar{A}_{n}-\bar{\alpha
}\right \Vert _{\ell_{q}}\leq \left \Vert L_{\bar{A}}\right \Vert _{\chi}%
\leq \underset{n}{\lim \sup}\left \Vert \bar{A}_{n}-\bar{\alpha}\right \Vert
_{\ell_{q}}, \tag{3.8}%
\end{equation}
where $\bar{\alpha}=(\bar{\alpha}_{k})$ with $\bar{\alpha}_{k}=\lim_{n}\bar
{a}_{nk}$ for all $k\in%
%TCIMACRO{\U{2115} }%
%BeginExpansion
\mathbb{N}
%EndExpansion
$.

Now, let us write $S=S_{\ell_{p}(\widehat{F})}$ and $\bar{S}=S_{\ell_{p}},$
for short. Then, we obtain by (1.4) and Lemma 1.2 that%
\begin{equation}
\left \Vert L_{A}\right \Vert _{\chi}=\chi(L_{A}(S))=\chi(AS) \tag{3.9}%
\end{equation}
and%
\begin{equation}
\left \Vert L_{\bar{A}}\right \Vert _{\chi}=\chi(L_{\bar{A}}(\bar{S}))=\chi
(\bar{A}\bar{S}). \tag{3.10}%
\end{equation}

Further, we have by (2.3) that $x\in S$ if and only if $y\in \bar{S}$ and since
$Ax=\bar{A}y$ by Lemma 2.3, we deduce that $AS=\bar{A}\bar{S}.$ This leads us
with (3.9) and (3.10) to the consequence that $\left \Vert L_{A}\right \Vert
_{\chi}=\left \Vert L_{\bar{A}}\right \Vert _{\chi}.$ Hence, we get (3.5) from
(3.8). This completes the proof.
\end{proof}

The conclusions of Theorem 3.2 still hold for $\ell_{1}(\widehat{F})$ or
$\ell_{\infty}(\widehat{F})$ instead of $\ell_{p}(\widehat{F})$ with $q=1,$
and on replacing the summations over $k$ by the supremums over $k$ in the case
$\ell_{1}(\widehat{F})$. Then, we have the following results:

\begin{theorem}
Let $\bar{A}=(\bar{a}_{nk})$ be the associated matrix defined by (2.9). Then
we have

(a) If $A\in(\ell_{\infty}(\widehat{F}),\ell_{\infty})$, then
\[
0\leq \left \Vert L_{A}\right \Vert _{\chi}\leq \underset{n}{\lim \sup}%
%TCIMACRO{\dsum \limits_{k}}%
%BeginExpansion
{\displaystyle \sum \limits_{k}}
%EndExpansion
\left \vert \bar{a}_{nk}\right \vert
\]
and%
\[
L_{A}\text{ is compact if }\underset{n}{\lim}%
%TCIMACRO{\dsum \limits_{k}}%
%BeginExpansion
{\displaystyle \sum \limits_{k}}
%EndExpansion
\left \vert \bar{a}_{nk}\right \vert =0.
\]

(b) If $A\in(\ell_{\infty}(\widehat{F}),c_{0})$, then
\[
\left \Vert L_{A}\right \Vert _{\chi}=\underset{n}{\lim \sup}%
%TCIMACRO{\dsum \limits_{k}}%
%BeginExpansion
{\displaystyle \sum \limits_{k}}
%EndExpansion
\left \vert \bar{a}_{nk}\right \vert
\]
and%
\[
L_{A}\text{ is compact if and only if }\underset{n}{\lim}%
%TCIMACRO{\dsum \limits_{k}}%
%BeginExpansion
{\displaystyle \sum \limits_{k}}
%EndExpansion
\left \vert \bar{a}_{nk}\right \vert =0.
\]

(c) If $A\in(\ell_{\infty}(\widehat{F}),c)$, then%
\[
\frac{1}{2}.\underset{n}{\lim \sup}%
%TCIMACRO{\dsum \limits_{k}}%
%BeginExpansion
{\displaystyle \sum \limits_{k}}
%EndExpansion
\left \vert \bar{a}_{nk}-\bar{a}_{k}\right \vert \leq \left \Vert L_{A}\right \Vert
_{\chi}\leq \underset{n}{\lim \sup}%
%TCIMACRO{\dsum \limits_{k}}%
%BeginExpansion
{\displaystyle \sum \limits_{k}}
%EndExpansion
\left \vert \bar{a}_{nk}-\bar{a}_{k}\right \vert
\]
and%
\[
L_{A}\text{ is compact if and only if }\underset{n}{\lim}%
%TCIMACRO{\dsum \limits_{k}}%
%BeginExpansion
{\displaystyle \sum \limits_{k}}
%EndExpansion
\left \vert \bar{a}_{nk}-\bar{a}_{k}\right \vert =0,
\]
where $\bar{\alpha}=(\bar{\alpha}_{k})$ with $\bar{\alpha}_{k}=\lim_{n}\bar
{a}_{nk}$ for all $k\in%
%TCIMACRO{\U{2115} }%
%BeginExpansion
\mathbb{N}
%EndExpansion
$.
\end{theorem}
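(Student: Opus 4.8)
The plan is to follow the proof of Theorem 3.2 line by line, replacing every appeal to Lemma 2.2(c) by the appropriate part of Lemma 2.2 and every classical estimate over $\ell_q$ by its analogue for the conjugate exponent $q=1$. As in Theorem 3.2, the compactness characterizations follow from the norm identities via (1.5), so it suffices to establish the three estimates for $\left\Vert L_{A}\right\Vert_{\chi}$.

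First I would treat the cases $A\in(\ell_{\infty}(\widehat{F}),\ell_{\infty})$ and $A\in(\ell_{\infty}(\widehat{F}),c_{0})$ together. Since $A_{n}\in\{\ell_{\infty}(\widehat{F})\}^{\beta}$ for every $n$, Lemma 2.2(a) gives the analogue of (3.7), namely
\[
\left\Vert A_{n}\right\Vert_{\ell_{\infty}(\widehat{F})}^{\ast}=\left\Vert\bar{A}_{n}\right\Vert_{\ell_{\infty}}^{\ast}=\sum_{k}\left\vert\bar{a}_{nk}\right\vert
\]
for all $n$. Substituting this into parts (a) and (b) of Lemma 3.1 with $X=\ell_{\infty}(\widehat{F})$ yields the first two estimates at once.

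The case $A\in(\ell_{\infty}(\widehat{F}),c)$ is the only place requiring care, and I expect it to be the main obstacle. By Lemma 2.3 (which holds for all $1\leq p\leq\infty$) the associated matrix satisfies $\bar{A}\in(\ell_{\infty},c)$ and $Ax=\bar{A}y$ whenever $y=\widehat{F}x$. The subtlety is that Lemma 3.1(c) requires the domain to have $AK$ or to equal $\ell_{\infty}$, and $\ell_{\infty}(\widehat{F})$ has neither property, so the lemma cannot be applied to $A$ directly. The resolution is to apply Lemma 3.1(c) instead to $\bar{A}$ over the classical space $\ell_{\infty}$, which is admissible precisely through the ``$X=\ell_{\infty}$'' clause. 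Together with Lemma 1.1 this produces the two-sided estimate (3.8) with $\ell_{q}$ replaced by $\ell_{1}$, that is, with $\left\Vert\bar{A}_{n}-\bar{\alpha}\right\Vert_{\ell_{1}}=\sum_{k}\left\vert\bar{a}_{nk}-\bar{\alpha}_{k}\right\vert$.

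Finally I would transfer the estimate from $\bar{A}$ back to $A$. Writing $S=S_{\ell_{\infty}(\widehat{F})}$ and $\bar{S}=S_{\ell_{\infty}}$, the norm relation (2.3) gives $x\in S$ if and only if $y=\widehat{F}x\in\bar{S}$, and $Ax=\bar{A}y$ forces $AS=\bar{A}\bar{S}$; hence (1.4) yields $\left\Vert L_{A}\right\Vert_{\chi}=\left\Vert L_{\bar{A}}\right\Vert_{\chi}$, from which the desired estimate for part (c) follows. The $\ell_{1}(\widehat{F})$ variant noted in the preceding remark is entirely parallel, using Lemma 2.2(b) so that the relevant row norm becomes $\sup_{k}\left\vert\bar{a}_{nk}\right\vert$ and applying Lemma 3.1(c) to $\bar{A}\in(\ell_{1},c)$, which is legitimate because $\ell_{1}$ has $AK$.
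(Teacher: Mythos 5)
Your proposal is correct and takes essentially the same route as the paper: the paper proves this theorem simply by remarking that the proof of Theorem 3.2 carries over with $q=1$, which is exactly what you have written out --- Lemma 2.2(a) in place of Lemma 2.2(c) for parts (a) and (b), and for part (c) the application of Lemma 3.1(c) to $\bar{A}\in(\ell_{\infty},c)$ (admissible via the ``$X=\ell_{\infty}$'' clause) followed by the transfer $\left\Vert L_{A}\right\Vert_{\chi}=\left\Vert L_{\bar{A}}\right\Vert_{\chi}$ through $AS=\bar{A}\bar{S}$. Your explicit justification of why Lemma 3.1(c) cannot be applied to $\ell_{\infty}(\widehat{F})$ directly is a point the paper leaves implicit, but the underlying mechanism is identical.
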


\begin{theorem}
Let $\bar{A}=(\bar{a}_{nk})$ be the associated matrix defined by (2.9). Then
we have

(a) If $A\in(\ell_{1}(\widehat{F}),\ell_{\infty})$, then%
\[
0\leq \left \Vert L_{A}\right \Vert _{\chi}\leq \underset{n}{\lim \sup}\left(
\sup_{k}\left \vert \bar{a}_{nk}\right \vert \right)
\]
and%
\[
L_{A}\text{ is compact if }\underset{n}{\lim}\left(  \sup_{k}\left \vert
\bar{a}_{nk}\right \vert \right)  =0.
\]

(b) If $A\in(\ell_{1}(\widehat{F}),c_{0})$, then
\[
\left \Vert L_{A}\right \Vert _{\chi}=\underset{n}{\lim \sup}\left(  \sup
_{k}\left \vert \bar{a}_{nk}\right \vert \right)
\]
and%
\[
L_{A}\text{ is compact if and only if }\underset{n}{\lim}\left(  \sup
_{k}\left \vert \bar{a}_{nk}\right \vert \right)  =0.
\]

(c) If $A\in(\ell_{1}(\widehat{F}),c)$, then%
\[
\frac{1}{2}.\underset{n}{\lim \sup}\left(  \sup_{k}\left \vert \bar{a}_{nk}%
-\bar{a}_{k}\right \vert \right)  \leq \left \Vert L_{A}\right \Vert _{\chi}%
\leq \underset{n}{\lim \sup}\left(  \sup_{k}\left \vert \bar{a}_{nk}-\bar{a}%
_{k}\right \vert \right)
\]
and%
\[
L_{A}\text{ is compact if and only if }\underset{n}{\lim}\left(  \sup
_{k}\left \vert \bar{a}_{nk}-\bar{a}_{k}\right \vert \right)  =0,
\]
where $\bar{\alpha}=(\bar{\alpha}_{k})$ with $\bar{\alpha}_{k}=\lim_{n}\bar
{a}_{nk}$ for all $k\in%
%TCIMACRO{\U{2115} }%
%BeginExpansion
\mathbb{N}
%EndExpansion
$.
\end{theorem}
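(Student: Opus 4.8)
The plan is to follow the proof of Theorem~3.2 almost verbatim, specialising to $p=1$ (so the conjugate exponent is $q=\infty$) and reading off the $\ell_1(\widehat{F})$-dual norm from Lemma~2.2(b) instead of Lemma~2.2(c). As in Theorem~3.2, the three compactness criteria are immediate consequences of the corresponding estimates for $\left\Vert L_A\right\Vert_\chi$ together with the characterisation (1.5), so the genuine content is to establish the three inequalities. Throughout I use that $\ell_1(\widehat{F})\supset\varphi$ is a $BK$ space with the norm in (2.3), so that Lemma~3.1 is applicable with $X=\ell_1(\widehat{F})$.

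For parts (a) and (b) I would argue directly with $X=\ell_1(\widehat{F})$. The hypothesis $A\in(\ell_1(\widehat{F}),\ell_\infty)$ (resp. $A\in(\ell_1(\widehat{F}),c_0)$) forces $A_n\in\{\ell_1(\widehat{F})\}^\beta$ for every $n$, so Lemma~2.2(b) gives $\left\Vert A_n\right\Vert^\ast_{\ell_1(\widehat{F})}=\sup_k\left\vert\bar{a}_{nk}\right\vert<\infty$ for each fixed $n$. Substituting this value into parts (a) and (b) of Lemma~3.1 then yields the upper bound $\limsup_n\sup_k\left\vert\bar{a}_{nk}\right\vert$ in (a) and the exact identity $\left\Vert L_A\right\Vert_\chi=\limsup_n\sup_k\left\vert\bar{a}_{nk}\right\vert$ in (b), which is the claim.

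For part (c) I would pass to the associated matrix, exactly as in (3.8)--(3.10). By Lemma~2.3, $A\in(\ell_1(\widehat{F}),c)$ implies $\bar{A}\in(\ell_1,c)$; since $\ell_1$ has $AK$, Lemma~3.1(c) applies to $\bar{A}$, and combined with Lemma~1.1 (which identifies $\left\Vert\cdot\right\Vert^\ast_{\ell_1}$ with the $\ell_\infty$-norm) it gives $\frac{1}{2}\limsup_n\sup_k\left\vert\bar{a}_{nk}-\bar{\alpha}_k\right\vert\le\left\Vert L_{\bar{A}}\right\Vert_\chi\le\limsup_n\sup_k\left\vert\bar{a}_{nk}-\bar{\alpha}_k\right\vert$, where $\bar{\alpha}_k=\lim_n\bar{a}_{nk}$. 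It then remains to transfer this estimate from $\bar{A}$ to $A$: writing $S=S_{\ell_1(\widehat{F})}$ and $\bar{S}=S_{\ell_1}$, formulas (1.4) and Lemma~1.2 give $\left\Vert L_A\right\Vert_\chi=\chi(AS)$ and $\left\Vert L_{\bar{A}}\right\Vert_\chi=\chi(\bar{A}\bar{S})$, while (2.3) makes $x\in S$ equivalent to $y=\widehat{F}x\in\bar{S}$ and $Ax=\bar{A}y$ by Lemma~2.3; hence the two image sets coincide, $AS=\bar{A}\bar{S}$, so $\left\Vert L_A\right\Vert_\chi=\left\Vert L_{\bar{A}}\right\Vert_\chi$, and part (c) follows.

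I do not expect a genuine obstacle, since the argument is a direct transcription of Theorem~3.2 with the single change of replacing $\left(\sum_k\left\vert\cdot\right\vert^q\right)^{1/q}$ by $\sup_k\left\vert\cdot\right\vert$. If anything needs care it is part (c): one must invoke Lemma~3.1(c) for $\bar{A}$ over $\ell_1$ (which carries the $AK$ property) rather than for $A$ over $\ell_1(\widehat{F})$, so that the limits $\bar{\alpha}_k=\lim_n\bar{a}_{nk}$ appearing in the statement are precisely those produced by the lemma, and one must justify the set identity $AS=\bar{A}\bar{S}$ underlying $\left\Vert L_A\right\Vert_\chi=\left\Vert L_{\bar{A}}\right\Vert_\chi$. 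I would also note explicitly that $\sup_k\left\vert\bar{a}_{nk}\right\vert$ is finite for each fixed $n$, this being guaranteed by $A_n\in\{\ell_1(\widehat{F})\}^\beta$ through Lemma~2.2(b).
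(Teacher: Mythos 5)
Your proposal is correct and is essentially the paper's own argument: the paper proves this theorem only by remarking that the proof of Theorem~3.2 carries over to $\ell_{1}(\widehat{F})$ with the $\ell_q$-norms replaced by supremums over $k$, which is exactly the adaptation you carry out (Lemma~2.2(b) in place of Lemma~2.2(c) for parts (a) and (b), and in part (c) the passage to $\bar{A}\in(\ell_{1},c)$ via Lemma~2.3, Lemma~3.1(c) applied to $\ell_{1}$ which has $AK$, and the identity $AS=\bar{A}\bar{S}$ giving $\left\Vert L_{A}\right\Vert_{\chi}=\left\Vert L_{\bar{A}}\right\Vert_{\chi}$). Your cautionary note that Lemma~3.1(c) must be invoked for $\bar{A}$ over $\ell_{1}$ rather than for $A$ over $\ell_{1}(\widehat{F})$ (which is not known to have $AK$) is precisely the right point of care.
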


Morever, as an immediate consequence of Theorem 3.3, we have the following corollary.

\begin{corollary}
If either $A\in(\ell_{\infty}(\widehat{F}),c_{0})$ or $A\in(\ell_{\infty
}(\widehat{F}),c)$, then the operator $L_{A}$ is compact.
\end{corollary}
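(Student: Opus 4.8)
The plan is to reduce everything to the associated matrix $\bar{A}$ acting on the ordinary space $\ell_{\infty}$, and then to observe that the very conditions which \emph{define} membership in the classes $(\ell_{\infty},c_{0})$ and $(\ell_{\infty},c)$ are already the compactness criteria recorded in Theorem 3.3. First I would invoke Lemma 2.3 with $X=c_{0}$ and with $X=c$: from $A\in(\ell_{\infty}(\widehat{F}),c_{0})$ we get $\bar{A}\in(\ell_{\infty},c_{0})$, and from $A\in(\ell_{\infty}(\widehat{F}),c)$ we get $\bar{A}\in(\ell_{\infty},c)$, where $\bar{A}=(\bar{a}_{nk})$ is the associated matrix of (2.9). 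This transfers the whole question to a membership statement about $\bar{A}$ on $\ell_{\infty}$.

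Next I would recall the classical characterizations of these two matrix classes. A matrix $B=(b_{nk})$ lies in $(\ell_{\infty},c_{0})$ if and only if $\lim_{n}\sum_{k}|b_{nk}|=0$; and $B$ lies in $(\ell_{\infty},c)$ if and only if the limits $\beta_{k}=\lim_{n}b_{nk}$ exist for every $k$ and $\lim_{n}\sum_{k}|b_{nk}-\beta_{k}|=0$. Applying the first with $B=\bar{A}$ yields $\lim_{n}\sum_{k}|\bar{a}_{nk}|=0$, which is exactly the hypothesis in the compactness clause of Theorem 3.3(b); applying the second yields $\lim_{n}\sum_{k}|\bar{a}_{nk}-\bar{\alpha}_{k}|=0$ with $\bar{\alpha}_{k}=\lim_{n}\bar{a}_{nk}$, which is exactly the hypothesis in the compactness clause of Theorem 3.3(c). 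Hence in the $c_{0}$ case Theorem 3.3(b) forces $\|L_{A}\|_{\chi}=0$, and in the $c$ case the upper bound in Theorem 3.3(c) forces $\|L_{A}\|_{\chi}=0$; in either case $L_{A}$ is compact by (1.5).

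The only substantive ingredient is the appeal to these standard characterizations of $(\ell_{\infty},c_{0})$ and $(\ell_{\infty},c)$ (of the type collected in the survey [3]); once they are in place the corollary is immediate and no estimate is required, so there is no genuine obstacle to overcome. The point worth flagging instead is why the analogous conclusion fails for the codomain $\ell_{\infty}$: membership $A\in(\ell_{\infty}(\widehat{F}),\ell_{\infty})$ only delivers $\sup_{n}\sum_{k}|\bar{a}_{nk}|<\infty$ by Theorem 3.3(a), which bounds $\|L_{A}\|_{\chi}$ above by a finite quantity but does not force it to vanish. This is precisely the reason the corollary is stated only for the codomains $c_{0}$ and $c$, where the defining summability condition is uniform in the sense needed for the tail to disappear.
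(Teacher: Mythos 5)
Your proposal is correct and follows essentially the same route as the paper: transfer to the associated matrix $\bar{A}$ via Lemma 2.3, invoke the classical Stieglitz--Tietz/Schur characterizations of $(\ell_{\infty},c_{0})$ and $(\ell_{\infty},c)$ to obtain $\lim_{n}\sum_{k}\vert\bar{a}_{nk}\vert=0$ and $\lim_{n}\sum_{k}\vert\bar{a}_{nk}-\bar{\alpha}_{k}\vert=0$ respectively, and then conclude compactness from Theorem 3.3(b),(c) together with (1.5). The paper's proof is word-for-word this argument (citing [39] for the characterization), so nothing further is needed; your closing remark on why the codomain $\ell_{\infty}$ is excluded is a sound observation, though not part of the proof itself.
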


\begin{proof}
Let $A\in(\ell_{\infty}(\widehat{F}),c_{0}).$ Then, we have by Lemma 2.3 that
$\bar{A}\in(\ell_{\infty},c_{0})$ which implies that $\lim_{n}\left(  \sum
_{k}\left \vert \bar{a}_{nk}\right \vert \right)  =0,$ [39]. This leads us with
Theorem 3.3(b) to the consequence that $L_{A}$ is compact. Similarly, if
$A\in(\ell_{\infty}(\widehat{F}),c)$ then $\bar{A}\in(\ell_{\infty},c)$ and
hence $\lim_{n}\left(  \sum_{k}\left \vert \bar{a}_{nk}-\bar{a}_{k}\right \vert
\right)  =0$, where $\bar{\alpha}=(\bar{\alpha}_{k})$ with $\bar{\alpha}%
_{k}=\lim_{n}\bar{a}_{nk}$ for all $k\in%
%TCIMACRO{\U{2115} }%
%BeginExpansion
\mathbb{N}
%EndExpansion
$. Therefore, we deduce from Theorem 3.3(c) that $L_{A}$ is compact.
\end{proof}

Throughout, let $%
%TCIMACRO{\tciFourier}%
%BeginExpansion
\mathcal{F}%
%EndExpansion
_{m}$ $(m\in%
%TCIMACRO{\U{2115} }%
%BeginExpansion
\mathbb{N}
%EndExpansion
)$ be the subcollection of $%
%TCIMACRO{\tciFourier}%
%BeginExpansion
\mathcal{F}%
%EndExpansion
$ consisting of all nonempty and finite subsets of $%
%TCIMACRO{\U{2115} }%
%BeginExpansion
\mathbb{N}
%EndExpansion
$ with elements that are greater than $m$, that is%
\[%
%TCIMACRO{\tciFourier}%
%BeginExpansion
\mathcal{F}%
%EndExpansion
_{m}=\left \{  N\in%
%TCIMACRO{\tciFourier}%
%BeginExpansion
\mathcal{F}%
%EndExpansion
:n>m\text{ for all }n\in%
%TCIMACRO{\U{2115} }%
%BeginExpansion
\mathbb{N}
%EndExpansion
\right \}  ;\text{ }(m\in%
%TCIMACRO{\U{2115} }%
%BeginExpansion
\mathbb{N}
%EndExpansion
).
\]

The next lemma [25, Theorem 3.11] gives necessary and sufficient conditions
for a matrix transformation from a $BK$ space to $\ell_{1}$ to be compact.

\begin{lemma}
Let $X\supset \varphi$ be a $BK$ space. If $A\in(X,\ell_{1})$, then
\[
\lim_{m}\left(  \sup_{N\in%
%TCIMACRO{\tciFourier}%
%BeginExpansion
\mathcal{F}%
%EndExpansion
_{m}}\left \Vert
%TCIMACRO{\dsum \limits_{n\in N}}%
%BeginExpansion
{\displaystyle \sum \limits_{n\in N}}
%EndExpansion
A_{n}\right \Vert _{X}^{\ast}\right)  \leq \left \Vert L_{A}\right \Vert _{\chi
}\leq4.\lim_{m}\left(  \sup_{N\in%
%TCIMACRO{\tciFourier}%
%BeginExpansion
\mathcal{F}%
%EndExpansion
_{m}}\left \Vert
%TCIMACRO{\dsum \limits_{n\in N}}%
%BeginExpansion
{\displaystyle \sum \limits_{n\in N}}
%EndExpansion
A_{n}\right \Vert _{X}^{\ast}\right)  .
\]

\end{lemma}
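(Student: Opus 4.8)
The plan is to reduce the computation of $\|L_A\|_\chi$ to the size of the tails of $Ax$ in $\ell_1$, and then to compare the supremum of those tails over the unit sphere with the quantity built from finite row sums. Since $A\in(X,\ell_1)$ and $\ell_1$ is a $BK$ space, Lemma 1.2 gives $L_A\in B(X,\ell_1)$, so by (1.4) we have $\|L_A\|_\chi=\chi(L_A(S_X))=\chi(AS_X)$. Applying Lemma 1.4 in the case $p=1$ to the bounded set $Q=AS_X\in M_{\ell_1}$, and using that $\|(I-P_m)(Ax)\|_{\ell_1}=\sum_{n>m}|A_n(x)|$, I would obtain
\[
\|L_A\|_\chi=\lim_{m}\left(\sup_{x\in S_X}\sum_{n>m}|A_n(x)|\right).
\]
Setting $b_m=\sup_{x\in S_X}\sum_{n>m}|A_n(x)|$ and $c_m=\sup_{N\in\mathcal{F}_m}\|\sum_{n\in N}A_n\|_X^{\ast}$, the task reduces to the two-sided estimate $c_m\le b_m\le 4c_m$, after which the result follows by letting $m\to\infty$. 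Both $(b_m)$ and $(c_m)$ are nonincreasing (the tail shortens as $m$ grows, and $\mathcal{F}_{m+1}\subset\mathcal{F}_m$), so the limits in the statement exist.

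For the lower bound, fix $N\in\mathcal{F}_m$. For every $x\in S_X$ the row sum acts as $(\sum_{n\in N}A_n)(x)=\sum_{n\in N}A_n(x)$, whence $|\sum_{n\in N}A_n(x)|\le\sum_{n\in N}|A_n(x)|\le\sum_{n>m}|A_n(x)|\le b_m$ because $N\subset\{m+1,m+2,\dots\}$. Taking the supremum over $x\in S_X$ and invoking the definition (1.3) gives $\|\sum_{n\in N}A_n\|_X^{\ast}\le b_m$, and the supremum over $N\in\mathcal{F}_m$ then yields $c_m\le b_m$.

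The main obstacle is the upper bound $b_m\le 4c_m$, where the relevant finite subsets of $\mathcal{F}_m$ must be produced by hand. I would fix $x\in S_X$ and a finite set $F\subset\{m+1,m+2,\dots\}$, and split $F$ according to the sign of $A_n(x)$ into $F^{+}=\{n\in F:A_n(x)\ge 0\}$ and $F^{-}=F\setminus F^{+}$; for complex scalars one splits further by the signs of the real and imaginary parts, and it is exactly this fourfold splitting that produces the constant $4$ (for real scalars the twofold split already gives $2$). Each of the resulting blocks $G$ is either empty or belongs to $\mathcal{F}_m$, so from $|\sum_{n\in G}A_n(x)|\le\|\sum_{n\in G}A_n\|_X^{\ast}\le c_m$ and the identity $\sum_{n\in F}|A_n(x)|=|\sum_{n\in F^{+}}A_n(x)|+|\sum_{n\in F^{-}}A_n(x)|$ one gets $\sum_{n\in F}|A_n(x)|\le 4c_m$. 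Since this bound is uniform in $x$ and $F$, and $\sum_{n>m}|A_n(x)|$ is the supremum of $\sum_{n\in F}|A_n(x)|$ over all finite $F\subset\{m+1,\dots\}$, taking suprema gives $b_m\le 4c_m$. Combining the two estimates and passing to the limit completes the proof; the only points needing care are the verification that the splitting blocks lie in $\mathcal{F}_m$ and the bookkeeping of the constant.
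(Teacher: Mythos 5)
Your proof is correct. Note, however, that the paper itself gives no proof of this statement: it is Lemma 3.6, imported verbatim as [25, Theorem 3.11] (Mursaleen--Noman), so there is no in-paper argument to compare against. What you have written is essentially a reconstruction of the standard proof from that reference: reduce $\left\Vert L_{A}\right\Vert _{\chi}=\chi(AS_{X})$ via Lemma 1.4 with $p=1$ to the tail quantities $b_{m}=\sup_{x\in S_{X}}\sum_{n>m}\left\vert A_{n}(x)\right\vert$, then prove $c_{m}\leq b_{m}\leq4c_{m}$ by the sign-splitting of finite subsets of the tail. Your two bounds are sound: the lower bound correctly uses that $\sum_{n\in N}A_{n}\in X^{\beta}$ acts as $x\mapsto\sum_{n\in N}A_{n}(x)$ for finite $N$, and the upper bound correctly handles possibly empty blocks, uses that a nonnegative series is the supremum of its finite partial sums, and accounts for the constant ($2$ suffices for the real scalars the paper works with, $4$ covers the complex case, and either way the stated inequality with $4$ holds). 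The monotonicity remark justifying existence of the limits is also needed and correct. In the cited source the splitting step is itself encapsulated in a known norm equivalence $\left\Vert A\right\Vert _{(X,\ell_{1})}\leq\left\Vert L_{A}\right\Vert \leq4\left\Vert A\right\Vert _{(X,\ell_{1})}$ applied to the truncated matrix with rows $A_{n}$, $n>m$; you prove that estimate by hand, which makes your write-up self-contained rather than lemma-dependent, at no loss of correctness.
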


Now, we prove the following result:

\begin{theorem}
Let $1<p<\infty$ and $q=p/(p-1)$. If $A\in(\ell_{p}(\widehat{F}),\ell_{1})$,
then%
\begin{equation}
\lim_{m}\left \Vert A\right \Vert _{(\ell_{p}(\widehat{F}),\ell_{1})}^{(m)}%
\leq \left \Vert L_{A}\right \Vert _{\chi}\leq4.\lim_{m}\left \Vert A\right \Vert
_{(\ell_{p}(\widehat{F}),\ell_{1})}^{(m)} \tag{3.11}%
\end{equation}
and%
\begin{equation}
L_{A}\text{ is compact if and only if }\lim_{m}\left \Vert A\right \Vert
_{(\ell_{p}(\widehat{F}),\ell_{1})}^{(m)}=0, \tag{3.12}%
\end{equation}
where
\[
\left \Vert A\right \Vert _{(\ell_{p}(\widehat{F}),\ell_{1})}^{(m)}=\sup_{N\in%
%TCIMACRO{\tciFourier}%
%BeginExpansion
\mathcal{F}%
%EndExpansion
_{m}}\left(
%TCIMACRO{\dsum \limits_{k}}%
%BeginExpansion
{\displaystyle \sum \limits_{k}}
%EndExpansion
\left \vert
%TCIMACRO{\dsum \limits_{n\in N}}%
%BeginExpansion
{\displaystyle \sum \limits_{n\in N}}
%EndExpansion
\bar{a}_{nk}\right \vert ^{q}\right)  ^{1/q};\text{ \ }(m\in%
%TCIMACRO{\U{2115} }%
%BeginExpansion
\mathbb{N}
%EndExpansion
).
\]

\end{theorem}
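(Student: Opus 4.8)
The plan is to apply Lemma 3.6 with $X=\ell_p(\widehat{F})$ and then to evaluate the functional $\|\cdot\|_{\ell_p(\widehat{F})}^{*}$ on the finite row-sums $\sum_{n\in N}A_n$ by means of Lemma 2.2(c). First I would record that $\ell_p(\widehat{F})$ is a $BK$ space with $\ell_p(\widehat{F})\supset\varphi$: since $\widehat{F}$ is a triangle, $\widehat{F}e^{(k)}$ has only finitely many nonzero entries, so each $e^{(k)}$ belongs to $\ell_p(\widehat{F})$. Consequently Lemma 3.6 applies to any $A\in(\ell_p(\widehat{F}),\ell_1)$ and gives
\[
\lim_m\left(\sup_{N\in\mathcal{F}_m}\left\|\sum_{n\in N}A_n\right\|_{\ell_p(\widehat{F})}^{*}\right)\leq\|L_A\|_{\chi}\leq 4\cdot\lim_m\left(\sup_{N\in\mathcal{F}_m}\left\|\sum_{n\in N}A_n\right\|_{\ell_p(\widehat{F})}^{*}\right).
\]
Thus (3.11) will follow once the inner norm is identified with the expression defining $\|A\|_{(\ell_p(\widehat{F}),\ell_1)}^{(m)}$.

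The central computation is the evaluation of $\left\|\sum_{n\in N}A_n\right\|_{\ell_p(\widehat{F})}^{*}$ for a fixed finite set $N\in\mathcal{F}_m$. Writing $b=\sum_{n\in N}A_n$, each $A_n$ lies in $\{\ell_p(\widehat{F})\}^{\beta}$ and this $\beta$-dual is a linear space, so $b\in\{\ell_p(\widehat{F})\}^{\beta}$ and Lemma 2.2(c) yields $\|b\|_{\ell_p(\widehat{F})}^{*}=\left(\sum_k|\bar{b}_k|^{q}\right)^{1/q}$, where $\bar{b}$ is the sequence associated with $b$ through (2.8). Because $N$ is finite, the defining series (2.8) commutes with the sum over $n$, so that $\bar{b}_k=\sum_{n\in N}\bar{a}_{nk}$ for every $k$. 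Substituting this and taking the supremum over $N\in\mathcal{F}_m$ gives
\[
\sup_{N\in\mathcal{F}_m}\left\|\sum_{n\in N}A_n\right\|_{\ell_p(\widehat{F})}^{*}=\|A\|_{(\ell_p(\widehat{F}),\ell_1)}^{(m)},
\]
which together with the inequality above proves (3.11).

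It remains to deduce (3.12) from (3.11) and (1.5). Since $\mathcal{F}_{m+1}\subset\mathcal{F}_m$, the sequence $\left(\|A\|_{(\ell_p(\widehat{F}),\ell_1)}^{(m)}\right)_m$ is nonincreasing and bounded below by $0$, so its limit exists; and by (3.11) this limit is $0$ if and only if $\|L_A\|_{\chi}=0$, that is, if and only if $L_A$ is compact. The only step demanding genuine care — though it is mild — is the verification that $b\in\{\ell_p(\widehat{F})\}^{\beta}$ and that $\bar{b}_k=\sum_{n\in N}\bar{a}_{nk}$; both are immediate from the finiteness of $N$ and the linearity of the transform (2.8), so no idea beyond those already used for Theorem 3.2 is required. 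The whole argument is the exact analogue of the passage from Lemma 3.1 to Theorem 3.2, with Lemma 3.6 replacing Lemma 3.1.
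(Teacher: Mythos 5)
Your proof is correct and follows essentially the same route as the paper, whose own proof is just the one-line remark that (3.11) follows by combining Lemmas 2.2(c), 2.3 and 3.6 and that (3.12) then follows from (1.5). You have simply supplied the details the paper leaves implicit (that $\varphi\subset\ell_{p}(\widehat{F})$ so Lemma 3.6 applies, and that $\overline{\sum_{n\in N}A_{n}}=\sum_{n\in N}\bar{A}_{n}$ for finite $N$), all of which are accurate.
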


\begin{proof}
It is obvious that (3.11) is obtained by combining Lemmas 2.2(c), 2.3 and 3.6.
Also, by using (1.5), we get (3.12) from (3.11).
\end{proof}

\begin{theorem}
Let $1\leq p<\infty$. If $A\in(\ell_{1}(\widehat{F}),\ell_{p})$, then%
\begin{equation}
\left \Vert L_{A}\right \Vert _{\chi}=\lim_{m}\left(  \sup_{k}\left(
%TCIMACRO{\dsum \limits_{n=m}^{\infty}}%
%BeginExpansion
{\displaystyle \sum \limits_{n=m}^{\infty}}
%EndExpansion
\left \vert \bar{a}_{nk}\right \vert ^{p}\right)  ^{1/p}\right)  \tag{3.13}%
\end{equation}
and%
\begin{equation}
L_{A}\text{ is compact if and only if }\lim_{m}\left(  \sup_{k}\left(
%TCIMACRO{\dsum \limits_{n=m}^{\infty}}%
%BeginExpansion
{\displaystyle \sum \limits_{n=m}^{\infty}}
%EndExpansion
\left \vert \bar{a}_{nk}\right \vert ^{p}\right)  ^{1/p}\right)  =0, \tag{3.14}%
\end{equation}

\end{theorem}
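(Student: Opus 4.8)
The plan is to reduce the whole problem to the associated matrix $\bar{A}$ acting between the classical spaces $\ell_{1}$ and $\ell_{p}$, exactly as in the proof of Theorem 3.2, and then to invoke the projection formula for the Hausdorff measure of noncompactness in $\ell_{p}$ (Lemma 1.4). First I would apply Lemma 2.3: since $A\in(\ell_{1}(\widehat{F}),\ell_{p})$, the associated matrix $\bar{A}=(\bar{a}_{nk})$ satisfies $\bar{A}\in(\ell_{1},\ell_{p})$ and $Ax=\bar{A}y$ whenever $x\in\ell_{1}(\widehat{F})$ and $y=\widehat{F}x$. Writing $S=S_{\ell_{1}(\widehat{F})}$ and $\bar{S}=S_{\ell_{1}}$, the norm identity (2.3) shows that $x\in S$ if and only if $y\in\bar{S}$; combined with $Ax=\bar{A}y$ this gives $AS=\bar{A}\bar{S}$ as subsets of $\ell_{p}$. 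Hence, by (1.4), $\|L_{A}\|_{\chi}=\chi(AS)=\chi(\bar{A}\bar{S})=\|L_{\bar{A}}\|_{\chi}$, so it suffices to compute the latter.

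Next I would apply Lemma 1.4 to the bounded set $Q=\bar{A}\bar{S}\in M_{\ell_{p}}$, which is bounded because $L_{\bar{A}}\in B(\ell_{1},\ell_{p})$ by Lemma 1.2. This yields
\[
\|L_{\bar{A}}\|_{\chi}=\lim_{m}\Big(\sup_{y\in\bar{S}}\|(I-P_{m})\bar{A}y\|_{\ell_{p}}\Big),
\]
where $(I-P_{m})\bar{A}$ denotes the matrix obtained from $\bar{A}$ by replacing its rows $0,1,\dots,m$ with zeros. The inner supremum is precisely the operator norm of $(I-P_{m})\bar{A}$ regarded as a map $\ell_{1}\to\ell_{p}$ (a bounded operator, being the composition of the bounded projection $I-P_{m}$ with $L_{\bar{A}}$).

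The key computation is therefore the $(\ell_{1},\ell_{p})$ operator norm of a matrix, which I would evaluate column-wise: for $y\in\ell_{1}$ one has $\bar{A}y=\sum_{k}y_{k}\bar{A}^{(k)}$, with $\bar{A}^{(k)}=(\bar{a}_{nk})_{n}$ the $k$-th column, so that $\|\bar{A}y\|_{\ell_{p}}\leq(\sup_{k}\|\bar{A}^{(k)}\|_{\ell_{p}})\|y\|_{\ell_{1}}$, with equality approached on the unit vectors $e^{(k)}$. Applied to the truncated matrix $(I-P_{m})\bar{A}$, whose $k$-th column is $(\bar{a}_{nk})_{n>m}$ padded with zeros, this gives
\[
\sup_{y\in\bar{S}}\|(I-P_{m})\bar{A}y\|_{\ell_{p}}=\sup_{k}\Big(\sum_{n=m+1}^{\infty}|\bar{a}_{nk}|^{p}\Big)^{1/p}.
\]
This column-norm identity is the same mechanism underlying Lemma 2.4. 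Letting $m\to\infty$, and noting that shifting the lower summation index from $m+1$ to $m$ leaves the limit unchanged, yields (3.13). Finally, (3.14) is immediate from (3.13) together with the compactness criterion (1.5), that $L_{A}$ is compact if and only if $\|L_{A}\|_{\chi}=0$.

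I expect the only delicate point to be the column-norm evaluation of the $(\ell_{1},\ell_{p})$ operator norm for the truncated matrix; the remaining steps are a direct transcription of the reduction already carried out in the proof of Theorem 3.2.
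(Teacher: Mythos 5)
Your proposal is correct and follows essentially the same route as the paper's proof: reduction to the associated matrix $\bar{A}$ via Lemma 2.3, the projection formula of Lemma 1.4, the Minkowski-type upper estimate, and the lower bound coming from the unit vectors $e^{(k)}$ (the paper realizes these as the preimages $c^{(k)}$ with $\widehat{F}c^{(k)}=e^{(k)}$, using monotonicity of $\chi$ on $AU\subset AS$, while you fold both bounds into an exact per-$m$ evaluation of $\sup_{y\in \bar{S}}\Vert (I-P_{m})\bar{A}y\Vert _{\ell_{p}}$ as a column-norm supremum). The only remaining discrepancy --- your summation starting at $n=m+1$ versus $n=m$ in (3.13) --- is correctly dispatched by noting the tail suprema are monotone, so the two limits coincide.
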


\begin{proof}
Let us remark that the limit in (3.13) exists by Lemma 2.4.

Now, we write $S=S_{\ell_{1}(\widehat{F})}$. Then, we have by Lemma 1.2 that
$L_{A}(S)=AS\in M_{\ell_{p}}$. Thus, it follows from (1.4) and Lemma 1.4 that%
\begin{equation}
\left \Vert L_{A}\right \Vert _{\chi}=\chi(AS)=\lim_{m}\left(  \sup_{x\in
S}\left \Vert (I-P_{m})(Ax)\right \Vert _{\ell_{p}}\right)  , \tag{3.15}%
\end{equation}
where $P_{m}:\ell_{p}\rightarrow \ell_{p}$ $(m\in%
%TCIMACRO{\U{2115} }%
%BeginExpansion
\mathbb{N}
%EndExpansion
)$ is the operator defined by $P_{m}(x)=(x_{0},x_{1},...,x_{m},0,0,...)$ for
all $x=(x_{k})\in \ell_{p}$ and $I$ is the identity operator on $\ell_{p}$.

On the other hand, let $x\in \ell_{1}(\widehat{F})$ be given. Then $y\in
\ell_{1}$ and since $A\in(\ell_{1}(\widehat{F}),\ell_{p})$, we obtain from
Lemma 2.3 that $\bar{A}\in(\ell_{1},\ell_{p})$ and $Ax=\bar{A}y.$ Thus, we
have for every $m\in%
%TCIMACRO{\U{2115} }%
%BeginExpansion
\mathbb{N}
%EndExpansion
$ that%
\begin{align*}
\left \Vert (I-P_{m})(Ax)\right \Vert _{\ell_{p}}  &  =\left \Vert (I-P_{m}%
)(\bar{A}y)\right \Vert _{\ell_{p}}\\
&  =\left(
%TCIMACRO{\dsum \limits_{n=m+1}^{\infty}}%
%BeginExpansion
{\displaystyle \sum \limits_{n=m+1}^{\infty}}
%EndExpansion
\left \vert \bar{A}_{n}(y)\right \vert ^{p}\right)  ^{1/p}\\
&  =\left(
%TCIMACRO{\dsum \limits_{n=m+1}^{\infty}}%
%BeginExpansion
{\displaystyle \sum \limits_{n=m+1}^{\infty}}
%EndExpansion
\left \vert
%TCIMACRO{\dsum \limits_{k}}%
%BeginExpansion
{\displaystyle \sum \limits_{k}}
%EndExpansion
\bar{a}_{nk}y_{k}\right \vert ^{p}\right)  ^{1/p}\\
&  \leq%
%TCIMACRO{\dsum \limits_{k}}%
%BeginExpansion
{\displaystyle \sum \limits_{k}}
%EndExpansion
\left(
%TCIMACRO{\dsum \limits_{n=m+1}^{\infty}}%
%BeginExpansion
{\displaystyle \sum \limits_{n=m+1}^{\infty}}
%EndExpansion
\left \vert \bar{a}_{nk}y_{k}\right \vert ^{p}\right)  ^{1/p}\\
&  \leq \left \Vert y\right \Vert _{\ell_{1}}\left(  \sup_{k}\left(
%TCIMACRO{\dsum \limits_{n=m+1}^{\infty}}%
%BeginExpansion
{\displaystyle \sum \limits_{n=m+1}^{\infty}}
%EndExpansion
\left \vert \bar{a}_{nk}\right \vert ^{p}\right)  ^{1/p}\right) \\
&  =\left \Vert x\right \Vert _{\ell_{1}(\widehat{F})}\left(  \sup_{k}\left(
%TCIMACRO{\dsum \limits_{n=m+1}^{\infty}}%
%BeginExpansion
{\displaystyle \sum \limits_{n=m+1}^{\infty}}
%EndExpansion
\left \vert \bar{a}_{nk}\right \vert ^{p}\right)  ^{1/p}\right)  .
\end{align*}
This yields that
\[
\sup_{x\in S}\left \Vert (I-P_{m})(Ax)\right \Vert _{\ell_{p}}\leq \sup
_{k}\left(
%TCIMACRO{\dsum \limits_{n=m+1}^{\infty}}%
%BeginExpansion
{\displaystyle \sum \limits_{n=m+1}^{\infty}}
%EndExpansion
\left \vert \bar{a}_{nk}\right \vert ^{p}\right)  ^{1/p}\text{; \ }(m\in%
%TCIMACRO{\U{2115} }%
%BeginExpansion
\mathbb{N}
%EndExpansion
).
\]
Therefore, we deduce from (3.15) that
\begin{equation}
\left \Vert L_{A}\right \Vert _{\chi}\leq \lim_{m}\left(  \sup_{k}\left(
%TCIMACRO{\dsum \limits_{n=m+1}^{\infty}}%
%BeginExpansion
{\displaystyle \sum \limits_{n=m+1}^{\infty}}
%EndExpansion
\left \vert \bar{a}_{nk}\right \vert ^{p}\right)  ^{1/p}\right)  . \tag{3.16}%
\end{equation}

To prove the converse inequality, let $c^{(k)}\in \ell_{1}(\widehat{F})$ be
such that $\widehat{F}c^{(k)}=e^{(k)}$ $(k\in%
%TCIMACRO{\U{2115} }%
%BeginExpansion
\mathbb{N}
%EndExpansion
)$, that is, $e^{(k)}$ is the $\widehat{F}$-transform of $c^{(k)}$ for each
$k\in%
%TCIMACRO{\U{2115} }%
%BeginExpansion
\mathbb{N}
%EndExpansion
$. Then, we have by Lemma 2.3 that $Ac^{(k)}=\bar{A}e^{(k)}$ for every $k\in%
%TCIMACRO{\U{2115} }%
%BeginExpansion
\mathbb{N}
%EndExpansion
$.

Now, let $U=\{c^{(k)}:$ $k\in%
%TCIMACRO{\U{2115} }%
%BeginExpansion
\mathbb{N}
%EndExpansion
\}$. Then $U\subset S$ and hence $AU\subset AS$ which implies that
$\chi(AU)\leq \chi(AS)=\left \Vert L_{A}\right \Vert _{\chi}$.

Further, it follows by applying Lemma 1.4 that
\begin{align*}
\chi(AU)  &  =\lim_{m}\left(  \sup_{k}\left(  \left(
%TCIMACRO{\dsum \limits_{n=m+1}^{\infty}}%
%BeginExpansion
{\displaystyle \sum \limits_{n=m+1}^{\infty}}
%EndExpansion
\left \vert A_{n}(c^{(k)})\right \vert ^{p}\right)  ^{1/p}\right)  \right) \\
&  =\lim_{m}\left(  \sup_{k}\left(
%TCIMACRO{\dsum \limits_{n=m+1}^{\infty}}%
%BeginExpansion
{\displaystyle \sum \limits_{n=m+1}^{\infty}}
%EndExpansion
\left \vert \bar{a}_{nk}\right \vert ^{p}\right)  ^{1/p}\right)  .
\end{align*}

Thus, we obtain that
\begin{equation}
\lim_{m}\left(  \sup_{k}\left(
%TCIMACRO{\dsum \limits_{n=m+1}^{\infty}}%
%BeginExpansion
{\displaystyle \sum \limits_{n=m+1}^{\infty}}
%EndExpansion
\left \vert \bar{a}_{nk}\right \vert ^{p}\right)  ^{1/p}\right)  \leq \left \Vert
L_{A}\right \Vert _{\chi}. \tag{3.17}%
\end{equation}

Hence, we get (3.13) by combining (3.16) and (3.17). This completes the proof,
since (3.14) is immediate by (1.5) and (3.13).
\end{proof}


\begin{thebibliography}{99}                                                                                               %


\bibitem {[1]}E.E. Kara, Some topological and geometrical properties of new
Banach sequence spaces, J. Inequal. Appl. 2013 (38) (2013), 15 pages.

\bibitem {[2]}T. Koshy, \textit{Fibonacci and Lucas Numbers with
Applications}, Wiley, 2001.

\bibitem {[3]}E. Malkowsky, V. Rako\v{c}evi\'{c}, An introduction into the
theory of sequence spaces and measure of noncompactness, Zbornik radova,
Matematicki inst. SANU, Belgrade, \textbf{9}(17) (2000) 143-234.

\bibitem {[4]}A. Wilansky, \textit{Summability through Functional Analysis},
North-Holland Mathematics Studies 85, Elsevier Science Publishers, Amsterdam:
New York: Oxford, 1984.

\bibitem {[5]}H. K\i zmaz, On certain sequence spaces, Canad. Math. Bull.
\textbf{24}(2) (1981) 169--176.

\bibitem {[6]}B. Altay, F. Ba\c{s}ar, The matrix domain and the fine spectrum
of the difference operator $\Delta$ on the sequence space $\ell_{p}%
,$\ $(0<p<1)$, Commun. Math. Anal. \textbf{2}(2) (2007) 1-11.

\bibitem {[7]}F. Ba\c{s}ar, B. Altay, On the space of sequences of $p$-bounded
variation and related matrix mappings, Ukrainian Math. J. \textbf{55} (2003) 136--147.

\bibitem {[8]}R. \c{C}olak, M. Et, E. Malkowsky, Some Topics of Sequence
Spaces, F\i rat University Press, ISBN: 975-394-0386-6, pp.1-63, 2004.

\bibitem {[9]}Z.U. Ahmad, M. Mursaleen, K\"{o}the--Toeplitz duals of some new
sequence spaces and their matrix maps, Pub. de l'Inst. Math. Beograd,
\textbf{42 }(56) (1987) 57--61.

\bibitem {[10]}E. Malkowsky, Absolute and ordinary K\"{o}the--Toeplitz duals
of some sets of sequences and matrix transformations, Pub. de l'Inst. Math.
Beograd, \textbf{46}(60) (1989) 97--103.

\bibitem {[11]}S. Simons, The sequence spaces $\ell(p_{v})$ and $m(p_{v})$,
Proc. London Math. Soc. \textbf{3 }(15) (1965) 422--436.

\bibitem {[12]}I.J. Maddox, Continuous and K\"{o}the--Toeplitz duals of
certain sequence spaces, Proc. Camb. Phil. Soc. \textbf{65} (1965) 431--435.

\bibitem {[13]}B. Altay, F. Ba\c{s}ar, M. Mursaleen, Some generalizations of
the space $bv_{p}$ of $p$-bounded variation sequences, Nonlinear Anal.
TMA\textit{,} \textbf{68} (2008) 273--28.

\bibitem {[14]}B. Choudhary, S.K. Mishra, A note on K\"{o}the--Toeplitz duals
of certain sequence spaces and their matrix transformations, Int. J. Math.
Math. Sci. 18(4) (1995) 681-688.

\bibitem {[15]}M.A. Sar\i g\"{o}l, On difference sequence spaces, J. Karadeniz
Tech. Univ. Fac. Arts Sci. Ser. Math.-Phys., \textbf{10 }(1987) 63--71.

\bibitem {[16]}M. Et, On some difference sequence spaces, Turkish J. Math.7
(1993) 18--24.

\bibitem {[17]}M. Mursaleen, Generalized spaces of difference sequences, J.
Math. Anal. Appl. \textbf{203}(3) (1996), pp. 738--745.

\bibitem {[18]}S.K. Mishra, Matrix maps involving certain sequence spaces,
Indian J. Pure Appl. Math. 24(2) (1993) 125--132.

\bibitem {[19]}A.K. Gaur, M. Mursaleen, Difference sequence spaces, Int. J.
Math. Math. Sci. \textbf{21}(4) (1998) 701--706.

\bibitem {[20]}E. Malkowsky, M. Mursaleen, S. Suantai, The dual spaces of sets
of difference sequences of order m and matrix transformations, Acta Math.
Sinica (Engl. Ser.) \textbf{23}(3)(2007) 521-532.

\bibitem {[21]}M. Mursaleen, A.K. Noman, On some new difference sequence
spaces of non-absolute type, Math. Comput. Modelling \textbf{52} (2010) 603-617.

\bibitem {[22]}M. Kiri\c{s}\c{c}i, F. Ba\c{s}ar, Some new sequence spaces
derived by the domain of generalized difference matrix, Comput. Math. Appl.
\textbf{60} (2010) 1299-1309.

\bibitem {[23]}A. S\"{o}nmez, Some new sequence spaces derived by the domain
of the triple band matrix, Comput. Math. Appl.\textit{ }62(2) (2011) 641-650.

\bibitem {[24]}V. Rako\v{c}evi\'{c}, Measures of noncompactness and some
applications, Filomat, \textbf{12} (2) (1998) 87-120.

\bibitem {[25]}M. Mursaleen, A.K. Noman, Compactness by the Hausdorff measure
of noncompactness, Nonlinear Anal. TMA, \textbf{73 }(8) (2010) 2541--2557.

\bibitem {[26]}M. Mursaleen, A.K. Noman, The Hausdorff measure of
noncompactness of matrix operators on some $BK$ spaces, Operators and
Matrices, \textbf{5}(3) (2011) 473--486.

\bibitem {[27]}E.E. Kara, M. Ba\c{s}ar\i r, On some Euler $B^{(m)}$ difference
sequence spaces and compact operators, J. Math. Anal. Appl. \textbf{379}
(2011) 499--511.

\bibitem {[28]}M. Mursaleen, V. Karakaya, H. Polat, N. \c{S}im\c{s}ek, Measure
of noncompactness of matrix operators on some difference sequence spaces of
weighted means, Comput. Math. Appl. \textbf{62} (2011) 814--820.

\bibitem {[29]}M. Ba\c{s}ar\i r, E.E. Kara, On the $B$-difference sequence
space derived by generalized weighted mean and compact operators, J. Math.
anal. Appl. \textbf{391} (2012) 67-81.

\bibitem {[30]}F. Ba\c{s}ar, E. Malkowsky, The characterization of compact
operators on spaces of strongly summable and bounded sequences, Appl. Math.
Comput. \textbf{217} (2011) 5199--5207.

\bibitem {[31]}M. Mursaleen, S.A. Mohiuddine, Applications of measures of
noncompactness to the infinite system of differential equations in $\ell_{p}$
spaces, Nonlinear Anal. \textbf{75} (2012) 2111--2115.

\bibitem {[32]}M. Mursaleen, A.K. Noman, On $\sigma$-conservative matrices and
compact operators on the space $V_{\sigma}$, Appl. Math. Lett. \textbf{24}
(2011) 1554--1560.

\bibitem {[33]}B. de Malafosse, V. Rako\v{c}evi\'{c}, Applications of measure
of noncompactness in operators on the spaces $s_{\alpha}$, \ $s_{\alpha}^{0}$,
$s_{\alpha}^{c}$, $\ell_{\alpha}^{p},$ J. Math. Anal. Appl. \textbf{323}(1)
(2006), 131--145.

\bibitem {[34]}M. Ba\c{s}ar\i r, E.E. Kara, On compact operators on the Riesz
$B^{(m)}$-difference sequence spaces, Iran. J. Sci. Technol. \textbf{35}(A4)
(2011) 279--285.

\bibitem {[35]}M. Mursaleen, A.K. Noman, Compactness of matrix operators on
some new difference sequence spaces, Linear Algebra Appl. \textbf{436}(1)
(2012) 41--52.

\bibitem {[36]}M. Ba\c{s}ar\i r, E.E. Kara, On some difference sequence spaces
of weighted means and compact operators, Ann. Funct. Anal. \textbf{2} (2011) 114-129.

\bibitem {[37]}M. Ba\c{s}ar\i r, E.E. Kara, On compact operators on the Riesz
$B^{(m)}$-difference sequence spaces II, Iran. J. Sci. Technol. \textbf{36}%
(A3) (2012) (Special Issue-Mathematics), 371-376.

\bibitem {[38]}M. Mursaleen, A.K. Noman, Applications of Hausdorff measure of
noncompactness in the spaces of generalized means, Math. Inequal. Appl.
\textbf{16} (2013) 207-220.

\bibitem {[39]}M. Stieglitz, H. Tietz, Matrix transformationen von
folgenr\"{a}umen eine ergebnis\"{u}bersicht, Math. Z. \textbf{154} (1977) 1--16.
\end{thebibliography}
\end{document}